\newcommand{\PP}{\mathbb{P}}
\newcommand{\OO}{\mathcal{O}}
\newcommand{\II}{\mathcal{I}}
\newcommand{\HH}{\mathcal{H}}
\newcommand{\CC}{\mathbb{C}}
\newcommand{\ZZ}{\mathbb{Z}}
\newcommand{\bF}{\mathbb{F}}
\newcommand{\eE}{\mathcal{E}}
\newcommand{\xX}{\mathcal{X}}
\newcommand{\yY}{\mathcal{Y}}
\newcommand{\kK}{\mathcal{K}}
\newcommand{\ver}{\operatorname{vert}}
\newcommand{\Pic}{\operatorname{Pic}}
\theoremstyle{plain}
\newtheorem{lemma}{Lemma}[section]
\newtheorem*{theorem*}{Theorem}
\newtheorem*{lemma*}{Lemma}
\newtheorem*{proposition*}{Proposition}
\newtheorem*{conjecture*}{Conjecture}
\newtheorem*{corollary*}{Corollary}
\newtheorem*{problem*}{Problem}
\newtheorem{theorem}[lemma]{Theorem}
\newtheorem{corollary}[lemma]{Corollary}
\newtheorem{proposition}[lemma]{Proposition}
\theoremstyle{definition}
\newtheorem{definition}[lemma]{Definition}
\newtheorem{example}[lemma]{Example}
\newtheorem{remark}[lemma]{Remark}
\begin{document}

\title{Algebraic hyperbolicity of very general surfaces}
\author[I. Coskun]{Izzet Coskun}
\address{Department of Mathematics, Statistics and CS \\University of Illinois at Chicago, Chicago, IL 60607}
\email{coskun@math.uic.edu}

\author[E. Riedl]{Eric Riedl}
\email{ebriedl@nd.edu}
\address{Department of Mathematics \\ University of Notre Dame, Notre Dame, IN 46556}

\subjclass[2010]{Primary: 32Q45, 14H50. Secondary: 14J70, 14J29}
\keywords{Algebraic hyperbolicity, hypersurfaces, genus bounds}
\thanks{During the preparation of this article the first author was partially supported by the  NSF grant DMS-1500031 and NSF FRG grant  DMS 1664296 and  the second author was partially supported  by the NSF RTG grant DMS-1246844.}

\begin{abstract}
Recently, Haase and Ilten initiated the study of classifying algebraically hyperbolic surfaces in toric threefolds. We complete this  classification for $\PP^1 \times \PP^1 \times \PP^1$, $\PP^2 \times \PP^1$, $\bF_e \times \PP^1$ and the blowup of $\PP^3$ at a point, augmenting our earlier work on $\PP^3$. In the process, we codify several different techniques for proving algebraic hyperbolicity, allowing us to prove similar results for hypersurface in any variety admitting a group action with dense orbit.
\end{abstract}

\maketitle

\section{Introduction}
 A complex projective variety $X$ is {\em algebraically hyperbolic} if there exists an ample divisor $H$ and a real number $\epsilon > 0$ such that the geometric genus $g(C)$ of any integral curve $C \subset X$ satisfies the inequality $$2g(C) -2 \geq  \epsilon \deg_H(C).$$ Recently, Haase and Ilten  \cite{HaaseIlten} initiated the study of classifying algebraically hyperbolic surfaces in toric threefolds. In this paper, we complete their classification for $\PP^1 \times \PP^1 \times \PP^1$, $\PP^2 \times \PP^1$, $\bF_e \times \PP^1$ and the blowup of $\PP^3$ at a point. Building on the results of Clemens \cite{Clemens86}, Ein \cite{Ein, Ein2}, Pacienza \cite{Pacienza}, Voisin \cite{Voisin, Voisincorrection}, Clemens-Ran \cite{ClemensRan}, Xu \cite{Xu,Xu2} and the authors \cite{CoskunRiedlhyperbolicity}, we develop a general technique for proving the algebraic hyperbolicity of the vanishing locus of a very general section of a sufficiently ample vector bundle. We apply our techniques to several other examples.  
 
 A complex manifold $X$ is {\em Kobayashi hyperbolic} if the Kobayashi pseudometric is nondegenerate and {\em Brody hyperbolic} if every entire map $f: \CC \rightarrow X$ is constant. By Brody's theorem \cite{Brody}, Kobayashi and Brody hyperbolicity agree for compact complex  manifolds. See Demailly's survey paper \cite{Demaillynew} for background on hyperbolicity, and \cite{RiedlYang, BercziKirwan} for recent advances on the Kobayashi Conjecture. Demailly introduced algebraic hyperbolicity as an algebraic analogue for Kobayashi hyperbolicity. He proved that for smooth projective varieties Kobayashi hyperbolicity implies algebraic hyperbolicity and conjectured the converse \cite{Demailly}. Deep conjectures of Green, Griffiths, Lang and Vojta predict that hyperbolicity has strong implications on the geometry and arithmetic of a variety \cite{CoskunHyperbolicity, Demailly}.  Hence, conjecturally, algebraic hyperbolicity is expected to control the geometry and arithmetic of $X$. Despite its importance, showing that a variety is algebraically hyperbolic can be very challenging. Nevertheless, we prove the following theorem answering many open cases of Question 6.6 from \cite{HaaseIlten}.

 \begin{theorem} Let $X$ be a very general surface in a threefold $A$ of class $D$. 
 \begin{enumerate}
\item  Let $A=\PP^1 \times \PP^1 \times \PP^1$ and $D=\sum_{i=1}^3 a_i H_i$, where $H_i$ is the pullback of the hyperplane class from the $i$th factor. Then $X$ is algebraically hyperbolic if and only if  up to permutations  $$a_1, a_2, a_3 \geq 3; \quad \mbox{or}  \quad a_1 = 2, a_2, a_3 \geq  4.$$ 
 \item Let $A= \PP^2 \times \PP^1$ and  $D= aH_1 + bH_2$, where $H_i$ is the pullback of the hyperplane class from the $i$th factor.  Then $X$ is algebraically hyperbolic if and only if  $$a \geq 4, b \geq 3 \quad \mbox{or} \quad b=2, a \geq 5.$$
 \item  Let $A$ be the blowup of $\PP^3$ at a point and $D=aH - bE$, where $H$ is the pullback of the hyperplane class  and $E$ is the class of the exceptional divisor. Then $X$ is algebraically hyperbolic if and only if $$a \geq b+2, b \geq 4 \quad \mbox{ or} \quad  b=0, a \geq 5.$$ 
 \item Let $A = \bF_e \times \PP^1$ and $D = a_1E + a_2 F + a_3 H$, where $\bF_e = \PP (\OO_{\PP^1} \oplus \OO_{\PP^1}(e))$, $E$ and $F$ are the pullbacks of the exceptional divisor and fiber class from $\bF_e$ and $H$ is the pullback of $\OO(1)$ from $\PP^1$. Let $\delta_{i,j}$ denote the Kronecker delta function. Then $X$ is algebraically hyperbolic if and only if 
 $$a_2-ea_1 \geq 2, a_1 \geq 3, a_3 \geq 3 \quad \mbox{ or} \quad a_3 = 2, a_1 \geq 4, a_2 - ea_1 \geq 3$$ 
 $$\mbox{or } a_1 = 2, a_3 \geq 4, a_2-ea_1 \geq 2 + \delta_{e,1}.$$
\end{enumerate}
 \end{theorem}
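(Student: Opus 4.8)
The plan is to prove each of the four classifications by establishing two implications separately: a \emph{necessity} direction, where I produce integral curves of small geometric genus whenever the stated numerical conditions fail, and a \emph{sufficiency} direction, where I prove a uniform genus inequality for \emph{every} integral curve on a very general $X$ whenever they hold. The organizing principle throughout is adjunction on the threefold: writing $\nu\colon \tilde C\to C$ for the normalization of an integral curve $C\subset X$ and using $K_X=(K_A+D)|_X$, the geometric genus satisfies
$$2g(\tilde C)-2 \ =\ \deg_C(K_A+D)\,+\,C^2_X\,-\,2\delta_C,$$
where $\delta_C\ge 0$ measures the singularities of $C$ and $C^2_X$ is its self-intersection on $X$. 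The positive term $\deg_C(K_A+D)$ is read off directly from the coefficients of $D$; the entire difficulty is concentrated in bounding the correction $C^2_X-2\delta_C$ from below.

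For necessity I would first dispose of the cases where a single coefficient is too small by exhibiting explicit low-genus coordinate slices. For $\PP^2\times\PP^1$ the slice $X\cap(\PP^2\times\{\mathrm{pt}\})$ is a plane curve of degree $a$ of genus $\binom{a-1}{2}$, which is rational or elliptic (hence violates hyperbolicity) precisely when $a\le 3$, forcing $a\ge 4$; for the blowup the slice $X\cap E$ is a plane curve of degree $b$, rational or elliptic exactly when $b\le 3$, forcing $b\ge 4$; and for $\PP^1\times\PP^1\times\PP^1$ the slice $X\cap(\PP^1\times\PP^1\times\{\mathrm{pt}\})$ has bidegree $(a_1,a_2)$ and genus $(a_1-1)(a_2-1)$, which is at most $1$ exactly when two of the $a_i$ are small. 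The genuinely delicate non-hyperbolic cases are the \emph{boundary} ones—such as $a_1=2,\ a_3=3$ on $\PP^1\times\PP^1\times\PP^1$, or $b=2,\ a=4$ on $\PP^2\times\PP^1$—where no coordinate slice has small genus. Here I would use that a structural coefficient equal to $2$ (resp.\ $3$) turns the relevant projection into a double (resp.\ triple) cover, and construct a \emph{covering family} of curves, e.g.\ bisections of the double cover or preimages of rulings, whose geometric genus grows strictly more slowly than its $H$-degree. Applying Riemann--Hurwitz to the projection pins down the exact threshold and shows hyperbolicity fails.

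For sufficiency I would invoke the deformation-positivity machinery codified in the paper, building on Clemens, Ein, Voisin, Pacienza, Xu and our earlier work. One works over $\PP=\PP H^0(A,\OO_A(D))$ with its universal surface $\xX\subset A\times\PP$, and uses that each $A$ is a smooth toric variety whose dense torus action moves any curve $C$: for very general $X$, the sections of $\OO_A(D)$ vanishing on $C$ impose enough conditions to force a lower bound on $C^2_X-2\delta_C$. Schematically this produces an inequality
$$2g(\tilde C)-2 \ \ge\ \deg_C(K_A+D)-\deg_C\mathcal{B},$$
with $\mathcal{B}$ a positivity-defect class controlled by the twisting needed to generate $T_A$, kept small by the global generation of $T_A$ for smooth toric $A$. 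I would then compute $K_A+D$ in each case—$\sum_i(a_i-2)H_i$, $(a-3)H_1+(b-2)H_2$, $(a-4)H-(b-2)E$, and $(a_1-2)E+(a_2-e-2)F+(a_3-2)H$—and verify that in the main regime, where every reduced coefficient is suitably positive, the right-hand side dominates a positive multiple of an ample class, yielding the required $\epsilon$. The asymmetry of the conditions, and for $\bF_e$ their phrasing through $a_2-ea_1$, is exactly the asymmetry of the nef cone of $A$.

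The main obstacle is the boundary regime, where a structural coefficient equal to $2$ makes $K_X=(K_A+D)|_X$ ample yet forces the genus defect to degenerate to zero on a finite set of curve classes: those contracted by the relevant projection, and, on $\bF_e\times\PP^1$, the classes meeting the negative section $E$. For these finitely many classes the uniform inequality above no longer suffices, and I would argue directly, analyzing the curves in each critical class through the conic-bundle or double-cover structure and sharpening the bound on $C^2_X-2\delta_C$ from the genericity of $X$ (a very general $X$ contains no curve of too-negative self-intersection relative to its degree). The compensating hypotheses $a_i\ge 4$, $a\ge 5$, and $a_2-ea_1\ge 2+\delta_{e,1}$ are precisely what keeps $2g(\tilde C)-2$ a positive multiple of the degree; the extra $\delta_{e,1}$ reflects the $(-1)$-curve on $\bF_1$, and I expect the $\bF_e$ case, where this negative curve interacts with the boundary thresholds, to demand the most careful bookkeeping.
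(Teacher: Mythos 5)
Your overall skeleton (necessity via explicit low-genus slices and degenerating families, sufficiency via the Lazarsfeld--Mukai deformation-positivity machinery, with the boundary coefficients $2$ identified as the crux) matches the paper, and your necessity sketch is essentially the paper's argument. But the sufficiency direction, which is where all the content lies, has two genuine gaps. First, your schematic inequality $2g-2\geq \deg_C(K_A+D)-\deg_C\mathcal{B}$ with the defect $\mathcal{B}$ ``controlled by the twisting needed to generate $T_A$'' and ``kept small by the global generation of $T_A$ for smooth toric $A$'' rests on a false premise: $T_A$ is \emph{not} globally generated for $\bF_e\times\PP^1$ with $e\geq 1$, nor for the blowup of $\PP^3$ at a point, since every vector field is tangent to the exceptional/negative divisor along it. The paper's defect term is of a different nature: it is $-\max_i \deg L_i|_C$ for a \emph{section-dominating collection} of line bundles $L_1,\dots,L_k$ for $\OO_A(D)$ (Definition \ref{def-sectionDominating}, Corollary \ref{cor-main}), a condition on multiplication of sections of $\OO_A(D)$, not on $T_A$; and the failure of homogeneity is dealt with separately, by checking that $X\cap(A\setminus A_0)$ contains no curves of genus $\leq 1$.

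Second, and more seriously, your treatment of the boundary cases is circular and mislocates the difficulty. You claim the uniform inequality degenerates only on ``a finite set of curve classes'' and propose to close these by asserting that a very general $X$ contains no curve of too-negative self-intersection relative to its degree; but by the adjunction identity you yourself set up, bounding $C^2_X-2\delta_C$ below \emph{is} the genus bound you are trying to prove, so this assertion is not a mechanism, it is the statement. Moreover the degeneration is not confined to finitely many classes: for $A=\PP^1\times\PP^1\times\PP^1$ with $a_1=2$, $a_2,a_3\geq 4$, the bound coming from $M_{H_1}$ gives nothing for every class with $c_1=0$, and nothing a priori forces $M_{H_2}$ or $M_{H_3}$ to map with non-torsion image to $N_{f/X}$. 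The paper's actual resolution is a dichotomy: either some $M_{H_i}$ with $a_i\geq 4$ has non-torsion image, giving the strong bound, or \emph{both} have torsion image, in which case Lemma \ref{lem-torsion} and Lemma \ref{lem-unionOfFibers}, combined with the $G$-invariance of $\yY$ and the fact that the $\PP GL_2$-orbit of a pair of distinct points in $\PP^1$ is dense, force $C$ to be the ramification curve of the double cover $\pi_{2,3}\colon X\to\PP^1\times\PP^1$, a specific smooth curve of genus $(2a_2-1)(2a_3-1)>1$. The analogous identifications (branch curve of projection from the blown-up point for $a=b+2$ on the blowup of $\PP^3$; ramification curve over $\bF_e$, resp.\ over $E$, in the $\bF_e\times\PP^1$ cases) handle the other boundary regimes, and the case $a=4$, $b\geq 3$ on $\PP^2\times\PP^1$ additionally requires the scroll construction of Lemma \ref{lem-scrolls} to produce the extra summand $d$, since the naive bound $2g-2\geq (b-2)ac$ vanishes on all classes with $c=0$. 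None of these mechanisms appear in your proposal, and without them the argument does not close in precisely the cases that were left open by Haase and Ilten.
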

\noindent Our techniques apply to many other examples and yield almost complete classifications for many other varieties such as  $A= \PP(1, 1, 1, n)$. 
 
We improve and streamline a technique developed by  Ein \cite{Ein, Ein2}, Pacienza \cite{Pacienza}, Voisin \cite{Voisin, Voisincorrection} and  \cite{CoskunRiedlhyperbolicity}. Let $\eE$ be a globally generated vector bundle and let $X$ be the zero locus of a very general section of $\eE$. Assume that $X$ is smooth and irreducible. Let $h: C \rightarrow X$ be a birational map from a smooth curve $C$ into $X$. There is a close relation between the normal bundle $N_{h/X}$ to the map $h$ and the genus of $C$. The main idea is to quantify the positivity of $N_{h/X}$ using the tangent bundle $T_{\xX}$ to the universal zero locus  over the family of sections of $\eE$. We find maps from direct sums of appropriate Lazarsfeld-Mukai bundles $M_L$ onto $T_{\xX}$. When $M_L$ maps generically surjectively to $N_{h/X}$, we obtain a genus bound. We introduce the notion of section-dominating collection of line bundles to capture the positivity (see Definition \ref{def-sectionDominating}). Our results for threefolds can be summarized in the following theorem (see Corollary \ref{cor-main}).

\begin{theorem}\label{thm-intro3fold}
Let $A$ be a threefold with a Zariski open subset $A_0$ which is homogeneous under an algebraic group action $G$. Let $\eE$ be a very ample line bundle on $A$ invariant under $G$ and let $X$ be the zero locus of a very general section of $\eE$. Let $L_1, \dots, L_k$ be a collection of section-dominating line bundles for $\eE$ on $A$. Then for any genus $g$ curve $f:C \to X$ intersecting $A_0$, $$2g-2-K_X \cdot C = \deg N_{f/X} \geq - \max_i \{ \deg L_i|_C \}.$$ In particular, if there exists $\epsilon >0$ such that $K_X \cdot C \geq (1+\epsilon) \deg  L_i|_C$ for all curves $C$ and indices $i$ and $(A\setminus A_0) \cap X$ does not contain any curves of  genus $0$ or $1$, then $X$ is  algebraically hyperbolic.
\end{theorem}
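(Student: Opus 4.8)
The plan is to deduce this theorem from the structural result that precedes it in the paper, namely the discussion of the universal zero locus $\xX$, its tangent bundle $T_{\xX}$, and the Lazarsfeld–Mukai bundles $M_L$ associated to a section-dominating collection. The whole point of the hypothesis that $A_0$ is homogeneous under $G$ and that $\eE$ is $G$-invariant is to guarantee that the evaluation maps from the $M_{L_i}$ are generically surjective onto $T_{\xX}$ along the universal curve, so that after restricting to a very general $X$ one controls the normal bundle $N_{f/X}$.

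\begin{proof}[Proof sketch]
First I would set up the universal family: let $\xX \to B$ be the universal zero locus over the space $B$ of sections of $\eE$, with the tautological curve $f \colon C \to X \subset \xX$ for $X$ very general. The key input is that the collection $L_1, \dots, L_k$ being section-dominating for $\eE$ (Definition \ref{def-sectionDominating}) produces, via the associated Lazarsfeld–Mukai bundles, a surjection $\bigoplus_i M_{L_i} \twoheadrightarrow T_{\xX}$ on the locus lying over $A_0$. Because $f(C)$ meets $A_0$, the composite map $\bigoplus_i M_{L_i}|_C \to T_{\xX}|_C \to N_{f/X}$ is generically surjective onto a rank-one quotient of $N_{f/X}$. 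Now I would invoke the standard positivity argument from Ein–Voisin–Pacienza and \cite{CoskunRiedlhyperbolicity}: a line bundle quotient $N_{f/X} \twoheadrightarrow Q$ receiving a generically surjective map from $M_{L_i}|_C$ cannot be too negative, and one computes that $\deg N_{f/X} \geq -\max_i \{\deg L_i|_C\}$. Combining this with the adjunction identity $\deg N_{f/X} = 2g - 2 - K_X \cdot C$ gives the first displayed inequality.

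For the second (``in particular'') assertion, I would argue as follows. Rearranging the genus bound gives
\begin{equation*}
2g - 2 \geq K_X \cdot C - \max_i \{\deg L_i|_C\}.
\end{equation*}
If the hypothesis $K_X \cdot C \geq (1 + \epsilon) \deg L_i|_C$ holds for every curve and index, then in particular $K_X \cdot C \geq (1+\epsilon) \max_i \{\deg L_i|_C\}$, whence
\begin{equation*}
2g - 2 \geq K_X \cdot C - \tfrac{1}{1+\epsilon} K_X \cdot C = \tfrac{\epsilon}{1+\epsilon}\, K_X \cdot C.
\end{equation*}
Since $K_X$ is ample on such an $X$ (of general type in these examples), $K_X \cdot C$ is bounded below by a positive multiple of $\deg_H C$ for a fixed ample $H$, which yields the algebraic hyperbolicity inequality $2g - 2 \geq \epsilon' \deg_H C$ for all curves $C$ meeting $A_0$. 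The remaining curves are precisely those contained in $(A \setminus A_0) \cap X$, and the hypothesis that this locus contains no curves of genus $0$ or $1$ handles them directly, since a curve of genus $\geq 2$ automatically satisfies the required bound after shrinking $\epsilon'$.

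The main obstacle, and the step that requires the homogeneity hypothesis in an essential way, is establishing the generic surjectivity of $\bigoplus_i M_{L_i} \to T_{\xX}$ along $f(C)$, and in particular ensuring that the image in $N_{f/X}$ is large enough to force the bound rather than merely bounding a proper subquotient. One must verify that for a \emph{very general} section the image of $C$ meets $A_0$ in a way compatible with the group action, so that the $M_{L_i}$ genuinely cover the positive directions of the normal bundle; the $G$-invariance of $\eE$ and the density of the orbit $A_0$ are exactly what guarantee this, via the earlier machinery culminating in Corollary \ref{cor-main}. The degree computation bounding a line-bundle quotient of $N_{f/X}$ from below by $-\deg L_i|_C$ is then the technical heart, but it follows the now-standard template and should be routine given the setup.
\end{proof}
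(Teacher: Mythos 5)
Your treatment of the main displayed inequality follows the paper's own route and is essentially correct: the universal families $\xX$, $\yY$, the identification of the vertical tangent sheaf with $\pi_2^*M_{\eE}$ (Proposition \ref{prop-summary}), the surjection $\bigoplus_i M_{L_i}^{\oplus s}\to M_{\eE}$ coming from the section-dominating hypothesis (Proposition \ref{prop-smallBundle}), the rank-one observation that forces a \emph{single} $M_{L_i}$ to map generically surjectively onto $N_{f/X}$, and the positivity bound $\deg Q\geq -\deg L_i|_C$ for rank-one quotients (Proposition \ref{cor-degImBundle}). One conceptual imprecision: the surjection onto the vertical tangent sheaf does not need homogeneity at all---Definition \ref{def-sectionDominating} requires surjectivity at \emph{every} point of $A$, so Proposition \ref{prop-smallBundle} holds everywhere. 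What homogeneity and the $G$-invariance of $\yY$ actually buy is Proposition \ref{prop-summary}(3): the cokernel $\kK$ of $T_{\yY/A}\to h^*T_{\xX/A}$ injects into $N_{h/\xX}$ with torsion cokernel, so the vertical positivity lands in the normal bundle at full rank. Your ``main obstacle'' paragraph gestures at the right issue, but your opening paragraph attributes it to the wrong map.

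The genuine gap is in the ``in particular'' step. After deducing $2g-2\geq \frac{\epsilon}{1+\epsilon}K_X\cdot C$, you assert that $K_X$ is ample (``of general type in these examples'') to convert $K_X\cdot C$ into a positive multiple of an ample degree. Ampleness of $K_X$ is not among the hypotheses of the theorem, and nothing you have established implies it; without it, the inequality $2g-2\geq \frac{\epsilon}{1+\epsilon}K_X\cdot C$ by itself does not exclude curves of bounded genus and unbounded $H$-degree, which is exactly what algebraic hyperbolicity must rule out. The gap can be closed using only the stated hypotheses: restrict the surjection $\bigoplus_i M_{L_i}^{\oplus s}\to M_{\eE}$ of Proposition \ref{prop-smallBundle} to (the normalization of) $C$. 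The kernel is then a subsheaf of the trivial bundle $\bigoplus_i \bigl(H^0(L_i)\otimes\OO_C\bigr)^{\oplus s}$, hence has degree at most $0$, which gives
\[ \deg_{\eE}C \;\leq\; s\sum_i \deg L_i|_C \;\leq\; sk\,\max_i\,\deg L_i|_C . \]
Combining this with $2g-2\geq \epsilon \max_i \deg L_i|_C$ (which follows from your two displayed inequalities) yields $2g-2\geq \frac{\epsilon}{sk}\deg_{\eE}C$, i.e.\ the hyperbolicity inequality with respect to the ample class $H=\eE|_X$, for every curve meeting $A_0$. (A posteriori this argument also shows $K_X$ is ample, but that conclusion requires the lemma above, so your write-up as it stands is circular.) Finally, for curves missing $A_0$ you should say why ``no genus $0$ or $1$ curves'' suffices: $(A\setminus A_0)\cap X$ is a proper closed subset of the surface $X$, so it contains only \emph{finitely many} irreducible curves, and finitely many curves of genus at least $2$ satisfy any hyperbolicity inequality after shrinking the constant; shrinking $\epsilon'$ would not work against an infinite family. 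To be fair, the paper's own proof of Corollary \ref{cor-main} compresses this entire deduction into ``it suffices to prove the degree bound,'' but since you made the $K_X$-ampleness claim explicit, it must either be justified as above or removed.
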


Theorem \ref{thm-intro3fold} is essentially the same as Theorem 3.6 from \cite{HaaseIlten}, and the proofs are fairly similar. Haase and Ilten focus on the special case of toric varieties and use the language of connected sections and focal loci, while our results apply to arbitrary threefolds with dense orbit of a group action use the concept of section-dominating line bundles. However, the spirit of both techniques is the same, and is a continuation of work of Ein, Voisin, and others.

The straightforward application of this technique does not always produce sharp bounds, and indeed, Haase and Ilten include a list of open cases at the end of their paper where their Theorem 3.6 and our Theorem \ref{thm-intro3fold} do not prove algebraic hyperbolicity (see Question 6.6 of \cite{HaaseIlten}). In this paper, we resolve almost all of their open questions by using a careful analysis of where various line bundles fail to be section-dominating. For instance, we study the geometric consequences of the condition that the map from $M_L$ to $N_{h/X}$ is generically zero (see Lemma \ref{lem-unionOfFibers}). We also study possible quotients of Lazarsfeld-Muaki bundles and produce scrolls that contain the curve (see Lemma \ref{lem-scrolls}) to allow us to obtain the sharp result in other cases.

\subsection*{The organization of the paper} In \S \ref{sec-preliminaries}, we introduce our setup, prove general positivity criteria for $N_{h/X}$ and obtain genus bounds in terms of the positivity. In \S \ref{sec-examples}, we apply our techniques to explicit examples and classify algebraic hyperbolicity of the very general surface in $\PP^1 \times \PP^1 \times \PP^1$, $\PP^2 \times \PP^1$, $\bF_e \times \PP^1$ and the blowup of $\PP^3$ at a point. We also discuss several other examples.

\subsection*{Acknowledgements} We would like to thank Lawrence Ein, Christian Haase, Nathan Ilten and Mihai P\u{a}un for valuable conversations. 

\section{Preliminaries}\label{sec-preliminaries}
In this section, we improve and streamline certain positivity techniques developed by Ein  \cite{Ein, Ein2}, Pacienza \cite{Pacienza},   Voisin \cite{Voisin, Voisincorrection} and \cite{CoskunRiedlhyperbolicity}. We then use these techniques to obtain a criterion for showing the algebraic hyperbolicity of the zero locus of a very general section of a vector bundle on a variety $A$ admitting a group action with dense orbit.

\subsection{Setup} Let $A$ be a smooth, complex projective variety of dimension $n$ and assume that $A$ contains a Zariski-open set $A_0$ admitting a transitive group action by an algebraic group $G$.  For example, $A$ could be a homogeneous variety or a smooth, projective toric variety. Let $\eE$ be a globally generated  vector bundle invariant under $G$ on $A$ of rank $r < n-1$. Let $V = H^0(A, \eE)$. Assume that the zero locus $X$ of a very general section of $\eE$ is a smooth, irreducible variety. We wish to understand when $X$ is an algebraically hyperbolic variety. 

Suppose that $X$ contains a curve of degree $e$ and genus $g$ that meets $A_0$. Then if $\xX_1 \to V$ is the universal hypersurface over $V$, we have the relative Hilbert scheme $\HH \to V$ with universal curve $\yY_1 \to \xX_1$, where the general fiber of $\yY_1 \to \HH$ is a geometric genus $g$ curve of degree $e$. There is a natural $G$-action on $\HH$. By a standard argument, we can find a $G$-invariant subvariety $U \subset \HH$ such that the map $U \to V$ is \'{e}tale. Restricting $\yY_1$ to $U$, we get a family $\yY_2 \to U$. By taking a resolution of the general fiber (and possibly further restricting $U$), we get a smooth family $\yY \to U$ whose fibers are smooth curves of genus $g$. We can pull back $\xX_1$ to a family $\xX$ over $U$, with projection maps $\pi_1: \xX \to U$ and $\pi_2: \xX \to A$. We have a natural generically injective map $\yY \to \xX$ which we denote by $h$. 

Because $\yY$ was constructed to be stable under the $G$-action, we have that $\pi_2 \circ h$ dominates $A_0$. Let the {\em vertical tangent sheaf} $T_{\xX/A}$ be defined by the natural sequence
$$0 \to T_{\xX/A} \to T_{\xX} \to \pi_2^* T_A \to 0.$$ Let the {\em vertical tangent sheaf} $T_{\yY/A}$ be the kernel of the natural map $T_{\yY} \rightarrow h^* \pi_2^* T_A$, which is surjective if $A=A_0$, but may fail to be surjective otherwise.
Let $M = M_{\eE}$ denote the Lazarsfeld-Mukai bundle defined by the short exact sequence
\[ 0 \to M \to V \otimes \OO_A \to \eE \to 0 . \]
 Let $t$ be a general element of $U$. Let $Y_t$ be the fiber of $\yY$ over $t$ and $X_t$ be the fiber of $\xX$ over $t$. Let $h_t : Y_t \to X_t$ be the restriction of $h$ to $Y_t$. The following result generalizes \cite[Lemmas 2.2, 2.4 and 2.5]{CoskunRiedlhyperbolicity}.

\begin{proposition}\label{prop-summary}

\begin{enumerate} 
\item $N_{h_t/X_t} \cong N_{h/\xX}|_{Y_t}$.
\item $T_{\xX/A} \cong \pi_2^* M$.
\item If $A_0 = A$, then $N_{h/\xX}$ is the cokernel of the map of vertical tangent spaces $T_{\yY/A} \to T_{\xX/A}$. If $A_0 \neq A$, then the cokernel of the map $T_{\yY/A} \to T_{\xX/A}$ is a sheaf $\kK$ that injects into $N_{h/\xX}$ with torsion cokernel.
\end{enumerate}
\end{proposition}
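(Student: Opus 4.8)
The plan is to handle the three parts in order, basing (3) on the identifications from (1) and (2) and reducing it to a single application of the snake lemma to the two relative tangent sequences. For (1), the crucial point is that $h\colon \yY \to \xX$ is a morphism over $U$. Comparing the relative tangent sequence $0 \to T_{\yY/U} \to T_{\yY} \to \pi_1^* T_U \to 0$ with the $h$-pullback of $0 \to T_{\xX/U} \to T_{\xX} \to \pi_1^* T_U \to 0$, the $T_U$-directions are identified, so the cokernel of $dh\colon T_{\yY}\to h^*T_{\xX}$ coincides with the cokernel of the relative differential $T_{\yY/U}\to h^*T_{\xX/U}$; in other words $N_{h/\xX}$ is the relative normal sheaf of $h$ over $U$. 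I would then restrict to a general fibre: for general $t$, flatness of $\yY\to U$ and $\xX\to U$ gives $T_{\yY/U}|_{Y_t}=T_{Y_t}$ and $(h^*T_{\xX/U})|_{Y_t}=h_t^*T_{X_t}$, and since forming a cokernel is right exact it commutes with restriction to the fibre, yielding $N_{h/\xX}|_{Y_t}=\operatorname{coker}(T_{Y_t}\to h_t^*T_{X_t})=N_{h_t/X_t}$.

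For (2), I would identify the universal zero locus geometrically. The incidence variety $\xX_1=\{(s,a)\in V\times A : s(a)=0\}$ has $\pi_2$-fibre over $a$ equal to $\{s : s(a)=0\}=\ker(\ev_a\colon V\to \eE_a)$; since $\eE$ is globally generated, $\ev_a$ is surjective and the fibre of the Lazarsfeld--Mukai sequence at $a$ is $0\to M_a\to V\to \eE_a\to 0$, so this fibre is exactly $M_a$. Thus $\xX_1$ is the total space of $M\hookrightarrow V\otimes\OO_A$ over $A$, and since the vertical tangent sheaf of the total space of a vector bundle is the pullback of that bundle, $T_{\xX_1/A}\cong \pi_2^*M$. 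As $U\to V$ is \'etale, the base change $\xX=\xX_1\times_V U\to \xX_1$ is \'etale over $A$ and preserves relative tangent sheaves, so $T_{\xX/A}\cong \pi_2^*M$; in particular $\xX$ is smooth over $A$, a fact I will use in (3).

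For (3), interpreting the map as the induced $T_{\yY/A}\to h^*T_{\xX/A}$, I would apply the snake lemma to
$$
\begin{array}{ccccccccc}
0 & \to & T_{\yY/A} & \to & T_{\yY} & \xrightarrow{\phi} & \operatorname{Im}\phi & \to & 0 \\
  &     & \downarrow &     & \downarrow &     & \downarrow &     & \\
0 & \to & h^* T_{\xX/A} & \to & h^* T_{\xX} & \to & h^* \pi_2^* T_A & \to & 0
\end{array}
$$
where $\phi$ is the composite $T_{\yY}\to h^*\pi_2^*T_A$ (so that $T_{\yY/A}=\ker\phi$), the middle vertical is $dh$, and the right vertical is the inclusion $\operatorname{Im}\phi\hookrightarrow h^*\pi_2^*T_A$. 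The bottom row is short exact because $\pi_2$ is smooth by (2), and since the right vertical is injective the connecting map vanishes, so the snake sequence produces
$$
0 \to \kK \to N_{h/\xX} \to (h^*\pi_2^*T_A)/\operatorname{Im}\phi \to 0,
$$
with $\kK=\operatorname{coker}(T_{\yY/A}\to h^*T_{\xX/A})$ and $N_{h/\xX}=\operatorname{coker}(dh)$. If $A_0=A$, then $\phi$ is surjective (as recorded in the setup), the right-hand term vanishes, and $\kK\cong N_{h/\xX}$. If $A_0\neq A$, the map $\pi_2\circ h\colon \yY\to A$ still dominates $A_0$ and is therefore dominant, so by generic smoothness in characteristic $0$ its differential $\phi$ is surjective on a dense open set; hence $(h^*\pi_2^*T_A)/\operatorname{Im}\phi$ is supported on a proper closed subset, i.e.\ torsion, giving the desired injection $\kK\hookrightarrow N_{h/\xX}$ with torsion cokernel.

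The main obstacle is the control of $\operatorname{coker}\phi$ in (3). One must show that $\phi$ is surjective at \emph{every} point in the homogeneous case — this is exactly where $G$-equivariance of the differential and the $G$-invariance of the family $\yY$ force surjectivity everywhere from surjectivity at a single point — and that it is at least generically surjective in general, which uses characteristic $0$ together with the dominance of $\pi_2\circ h$ onto the dense orbit $A_0$. The remaining steps are bookkeeping: checking that the bottom row is short exact (the smoothness of $\pi_2$ supplied by (2)) and that the formation of cokernels commutes with restriction to the general fibre in (1).
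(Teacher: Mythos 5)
Your proposal is correct and takes essentially the same route as the paper: your snake-lemma argument for (3) is exactly the paper's nine-lemma diagram, with your $\operatorname{Im}\phi$ playing the role of the paper's sheaf $S$ and the torsion quotient $T = (h^*\pi_2^*T_A)/\operatorname{Im}\phi$ controlled, just as in the paper, by surjectivity of $d(\pi_2\circ h)$ in the homogeneous case and generic surjectivity (dominance onto $A_0$ plus characteristic $0$) otherwise. For parts (1) and (2) the paper merely cites the proofs of Lemmas 2.2 and 2.4 of \cite{CoskunRiedlhyperbolicity}, and your arguments (identifying $\xX_1$ with the total space of $M$ inside $V\times A$, and comparing the relative tangent sequences over $U$ before restricting to a general fiber) are precisely the content of those citations; the only material in the paper's proof beyond the statement itself is the verification that the sequence $0 \to \kK \to N_{h/\xX} \to T \to 0$ stays exact after restriction to a general curve, which the paper needs for later applications.
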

\begin{proof}
Part (1) follows from the proof of \cite[Lemma 2.2]{CoskunRiedlhyperbolicity}. Part (2) follows from the proof of \cite[Lemma 2.4]{CoskunRiedlhyperbolicity} 

For Part (3), if $A$ is homogeneous under the action of $G$, then the map $T_{\yY} \to h^* \pi_2^* T_A$ is surjective. If $A \neq A_0$, then the map $T_{\yY} \to h^* \pi_2^*  T_A$ is not necessarily surjective---it is only generically surjective. Let $S$ be the image of $T_{\yY}$ in $h^* \pi_2^* T_A$. Then we have the following diagram:
\catcode`\@=11
\newdimen\cdsep
\cdsep=3em

\def\cdstrut{\vrule height .25\cdsep width 0pt depth .12\cdsep}
\def\@cdstrut{{\advance\cdsep by 2em\cdstrut}}

\def\arrow#1#2{
  \ifx d#1
    \llap{$\scriptstyle#2$}\left\downarrow\cdstrut\right.\@cdstrut\fi
  \ifx u#1
    \llap{$\scriptstyle#2$}\left\uparrow\cdstrut\right.\@cdstrut\fi
  \ifx r#1
    \mathop{\hbox to \cdsep{\rightarrowfill}}\limits^{#2}\fi
  \ifx l#1
    \mathop{\hbox to \cdsep{\leftarrowfill}}\limits^{#2}\fi
}
\catcode`\@=12

\cdsep=3em
$$
\begin{matrix}
& & 0 & & 0 & & 0 \cr
& & \arrow{u}{} & & \arrow{u}{} & & \arrow{u}{} \cr
0 & \arrow{r}{} &  S & \arrow{r}{} & h^* \pi_2^*T_{A} & \arrow{r}{} & T & \arrow{r}{} & 0 \cr
& & \arrow{u}{} & & \arrow{u}{} & & \arrow{u}{} \cr
0 & \arrow{r}{} &  T_{\yY}  & \arrow{r}{} & h^* T_{\xX} & \arrow{r}{} & N_{h/\xX} & \arrow{r}{} & 0          \cr
& &  \arrow{u}{} & & \arrow{u}{} & & \arrow{u}{} \cr
0 & \arrow{r}{} & T_{\yY/A} & \arrow{r}{} & h^*T_{\xX/A} & \arrow{r}{} & \kK & \arrow{r}{} & 0 \cr
& & \arrow{u}{} & & \arrow{u}{} & & \arrow{u}{} \cr
& & 0  & & 0 & & 0 \cr
\end{matrix}
$$
Here, $T$ is a torsion sheaf. Now restrict the above to a general curve parameterized by $\yY$. The second column remains exact since it is a sequence of vector bundles. The first column remains exact because $T_{\yY/A}$ is torsion free and the map $T_{\yY/A} \to T_{\yY}$ is generically injective when restricted to $C$. The rows remain exact by Lemma 2.1 of \cite{CoskunRiedlhyperbolicity}. Thus, by the nine lemma, the last column remains exact as well. Since $T$ is torsion, its restriction to a general curve  remains torsion.
\end{proof}

\subsection{Bounding the genus of curves} The following well-known lemma explains our interest in the positivity of $N_{h_t/X_t}$.

\begin{lemma}\label{lem-degreeGenus}
The following hold:
$$\deg(N_{h_t/X_t})= 2 g(Y_t) -2 - K_{X_t} \cdot h_t(Y_t),$$
$$\deg(N_{h_t/X_t}) \geq \deg(\kK|_{Y_t}).$$
\end{lemma}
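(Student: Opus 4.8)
The plan is to handle the two assertions separately: the equality is a Chern-class bookkeeping on the smooth curve $Y_t$, while the inequality is a degree comparison that falls straight out of Proposition \ref{prop-summary}.

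For the equality I would begin from the sequence defining the normal sheaf of the generically injective map $h_t$,
$$0 \to T_{Y_t} \to h_t^* T_{X_t} \to N_{h_t/X_t} \to 0,$$
which is exact because the differential $dh_t$ is injective on the line bundle $T_{Y_t}$ (it is injective at the generic point and $T_{Y_t}$ is torsion-free on the smooth curve $Y_t$). Degree is additive on short exact sequences of coherent sheaves on a smooth curve, even when $N_{h_t/X_t}$ carries torsion, so $\deg N_{h_t/X_t} = \deg h_t^* T_{X_t} - \deg T_{Y_t}$. Here $\deg T_{Y_t} = 2 - 2g(Y_t)$, and since $c_1(T_{X_t}) = -K_{X_t}$ and $h_t$ is birational onto its image, the projection formula gives $\deg h_t^* T_{X_t} = \deg h_t^*\bigl(c_1(T_{X_t})\bigr) = -K_{X_t}\cdot h_t(Y_t)$. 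Substituting the two degrees yields $\deg N_{h_t/X_t} = 2g(Y_t) - 2 - K_{X_t}\cdot h_t(Y_t)$.

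For the inequality I would invoke Proposition \ref{prop-summary} twice. Part (1) identifies $N_{h_t/X_t}$ with the restriction $N_{h/\xX}|_{Y_t}$, so it suffices to bound $\deg N_{h/\xX}|_{Y_t}$ from below. Part (3) supplies the inclusion $\kK \hookrightarrow N_{h/\xX}$ with torsion cokernel $T$; the key observation, already extracted in the proof of part (3), is that restricting the sequence $0 \to \kK \to N_{h/\xX} \to T \to 0$ to a general fiber $Y_t$ keeps it exact with $T|_{Y_t}$ still torsion. Additivity of degree then gives $\deg N_{h/\xX}|_{Y_t} = \deg \kK|_{Y_t} + \deg T|_{Y_t}$, and $\deg T|_{Y_t} = \mathrm{length}(T|_{Y_t}) \geq 0$ because $T|_{Y_t}$ is supported on finitely many points. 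Hence $\deg N_{h_t/X_t} \geq \deg \kK|_{Y_t}$, with equality in the homogeneous case $A_0 = A$, where $\kK = N_{h/\xX}$.

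The arithmetic is routine; the only genuine subtlety I would flag is the exactness of the restricted column, i.e.\ that tensoring $0 \to \kK \to N_{h/\xX} \to T \to 0$ with $\OO_{Y_t}$ introduces no spurious $\mathrm{Tor}$ term that would break degree additivity for general $t$. This is precisely what the nine-lemma argument in the proof of Proposition \ref{prop-summary}(3) guarantees, so I would simply cite it; verifying that genericity of $t$ is what makes the restriction stay exact is the step that needs the most care.
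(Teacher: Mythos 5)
Your proof is correct and takes essentially the same approach as the paper: the equality by taking degrees in the normal-sheaf sequence $0 \to T_{Y_t} \to h_t^*T_{X_t} \to N_{h_t/X_t} \to 0$, and the inequality from Proposition \ref{prop-summary} plus the fact that the restriction to a general $Y_t$ of the inclusion $\kK \hookrightarrow N_{h/\xX}$ stays injective with torsion (hence non-negative degree) cokernel. The only cosmetic difference is that the paper routes the inequality through the surjection $h^*\pi_2^*M \twoheadrightarrow \kK$ restricted to $Y_t$, whereas you work directly with the sequence $0 \to \kK \to N_{h/\xX} \to T \to 0$ and degree additivity; both hinge on the same nine-lemma exactness established in the proof of Proposition \ref{prop-summary}(3), which you correctly identify as the one delicate point.
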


\begin{proof}
The first equality follows by taking degrees of the terms in the exact sequence defining $N_{h_t/X_t}$
$$0 \rightarrow T_{Y_t} \rightarrow h_t^* T_{X_t} \rightarrow N_{h_t/X_t} \rightarrow 0.$$
Proposition \ref{prop-summary} provides a surjection from $h^* \pi_2^* M$ onto $\kK$, which in turn injects into $N_{h/\xX}$. Restricting to $Y_t$ we obtain a surjection onto a free subsheaf of $N_{h_t/X_t }$ of the same rank. It follows that the degree of $N_{h_t/X_t}$ is at least the degree of $\kK|_{Y_t}$.
\end{proof}

 It remains to bound the degree of $\kK|_{Y_t}$. We do this by working with bundles simpler than $M$ that still surject onto $\kK$. 

\begin{definition}\label{def-sectionDominating}
Let $\eE$ be a vector bundle on $A$. We say a non-trivial globally generated line bundle $L$ is a \emph{section-dominating} line bundle for $\eE$ if $\eE \otimes L^{\vee}$ is globally generated and the map 
\[ H^0(L \otimes \II_p) \otimes H^0(\eE \otimes L^{\vee}) \to H^0(\eE \otimes \II_p) \]
is surjective for every point $p \in A$. More generally, a collection of non-trivial globally generated line bundles  $L_1, \dots, L_u$ is a \emph{section-dominating  collection} of line bundles for $\eE$ if $\eE \otimes L_i^{\vee}$ is globally generated for every $1 \leq i \leq u$ and the map 
\[ \bigoplus_{i=1}^u (H^0(L_i \otimes \II_p) \otimes H^0(\eE \otimes L_i^{\vee})) \to H^0(\eE \otimes \II_p) \] is surjective for every point $p\in A$.
\end{definition}

\begin{example} Suppose $A = \PP^2 \times \PP^1$ with $H_i$ the pullback of the hyperplane class from the respective factors. Let $\eE = aH_1 + bH_2$ with $a,b > 0$. Then $H_1$ and $H_2$ are a section-dominating collection for $\eE$. Choose coordinates so that $p$ is $([0,0,1], [0,1])$. Then $H^0((aH_1+bH_2) \otimes \II_p)$ is the set of polynomials of bidegree $(a,b)$ in variables $x,y,z$ and $s,t$ where each monomial is divisible by either $x,y$ or $s$. This is precisely the image of the natural map $$H^0(H_1 \otimes \II_p) \otimes H^0((a-1)H_1+bH_2) \oplus H^0(H_2 \otimes \II_p) \otimes 
H^0(aH_1 + (b-1)H_2) \to H^0((aH_1+bH_2) \otimes \II_p).$$ 
\end{example}

\begin{example}
In a similar way, on $\PP^1 \times \PP^1 \times \PP^1$, $H_1$, $H_2$, and $H_3$ is a section-dominating collecting for any line bundle $aH_1+bH_2+cH_3$ with $a,b,c > 0$.
\end{example}

Given a section $\phi \in H^0(\eE \otimes L^{\vee})$, the natural multiplication map defines a map $L_i \to \eE$ and $H^0(L_i) \to H^0(\eE)$, hence induces a map from $M_{L_i} \to M_{\eE}$. 

\begin{proposition}
\label{prop-smallBundle}
Let $\eE$ be a globally generated vector bundle and $M_{\eE}$ the Lazarsfeld-Mukai bundle associated to $\eE$. Let $L_1, \dots, L_u$ be a section-dominating collection of line bundles for $\eE$. Then there is a surjection $\bigoplus_{i=1}^u M_{L_i}^{\oplus s} \to M_{\eE}$ for some integer $s$.
\end{proposition}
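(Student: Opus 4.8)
The plan is to build a single canonical map out of all the $M_{L_i}$ at once and then verify surjectivity fiber by fiber. For each $i$ and each $\phi \in H^0(\eE \otimes L_i^{\vee})$, the multiplication map $L_i \to \eE$ induces, as explained just before the statement, a map $M_{L_i} \to M_{\eE}$; assembling these over a basis of $H^0(\eE \otimes L_i^{\vee})$ and summing over $i$ yields a canonical map
$$\Phi \colon \bigoplus_{i=1}^u M_{L_i} \otimes H^0(\eE \otimes L_i^{\vee}) \to M_{\eE}.$$
Once $\Phi$ is known to be surjective, the statement of the proposition follows formally: setting $s = \max_i \dim H^0(\eE \otimes L_i^{\vee})$ and padding each underpopulated summand $M_{L_i} \otimes H^0(\eE \otimes L_i^\vee) \cong M_{L_i}^{\oplus s_i}$ up to $M_{L_i}^{\oplus s}$ by zero maps gives the desired surjection $\bigoplus_{i=1}^u M_{L_i}^{\oplus s} \to M_{\eE}$. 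So it suffices to prove $\Phi$ is surjective. Since $M_{\eE}$ is coherent, its cokernel $Q$ is coherent, and by Nakayama's lemma $Q = 0$ if and only if $Q \otimes k(p) = 0$ for every $p \in A$; as $Q \otimes k(p) = \operatorname{coker}(\Phi \otimes k(p))$ by right-exactness of $- \otimes k(p)$, it is enough to show that $\Phi \otimes k(p)$ is surjective at every point $p$.

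The key computation is to identify the fibers with spaces of sections vanishing at $p$. Because $\eE$ is a vector bundle, tensoring the defining sequence $0 \to M_{\eE} \to V \otimes \OO_A \to \eE \to 0$ with $k(p)$ involves no $\mathrm{Tor}$ terms and produces the exact sequence $0 \to M_{\eE} \otimes k(p) \to V \to \eE \otimes k(p) \to 0$. On the other hand, global generation of $\eE$ makes the evaluation $V = H^0(\eE) \to \eE \otimes k(p)$ surjective with kernel $H^0(\eE \otimes \II_p)$, via the sequence $0 \to \eE \otimes \II_p \to \eE \to \eE \otimes k(p) \to 0$. Comparing these two descriptions gives a natural identification $M_{\eE} \otimes k(p) \cong H^0(\eE \otimes \II_p)$, and the identical argument for each $L_i$ gives $M_{L_i} \otimes k(p) \cong H^0(L_i \otimes \II_p)$.

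Finally I would check that, under these identifications, $\Phi \otimes k(p)$ is exactly the section-dominating map. The map $M_{L_i} \to M_{\eE}$ attached to $\phi$ sits in a commutative ladder of the two defining short exact sequences whose middle vertical arrow is the multiplication map $\phi\cdot(-)\colon H^0(L_i) \to H^0(\eE)$; passing to fibers at $p$ and restricting to the kernels of evaluation at $p$, this middle arrow becomes $\sigma \mapsto \phi\sigma$ on $H^0(L_i \otimes \II_p) \to H^0(\eE \otimes \II_p)$. Hence $\Phi \otimes k(p)$ is precisely the multiplication map
$$\bigoplus_{i=1}^u H^0(L_i \otimes \II_p) \otimes H^0(\eE \otimes L_i^{\vee}) \to H^0(\eE \otimes \II_p), \qquad (\sigma_i) \otimes (\phi_i) \mapsto \sum_i \phi_i \sigma_i,$$
which is surjective for every $p$ by the very definition of a section-dominating collection (Definition \ref{def-sectionDominating}). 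Thus $\Phi \otimes k(p)$ is surjective for all $p$, so $\Phi$ is surjective, completing the argument. I expect the main obstacle to be not any single estimate but the bookkeeping in this last paragraph: one must confirm that the canonical fiber identification $M_{\eE}\otimes k(p) \cong H^0(\eE \otimes \II_p)$ is genuinely compatible with the ladder induced by each $\phi$, so that the abstract fiber map coincides on the nose with the concrete multiplication map in the hypothesis rather than merely up to an unknown isomorphism.
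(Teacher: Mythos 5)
Your proposal is correct and follows essentially the same route as the paper's proof: both assemble the multiplication maps $M_{L_i}\to M_{\eE}$ over bases of $H^0(\eE\otimes L_i^{\vee})$, identify the fibers $M_{\eE}\otimes k(p)$ and $M_{L_i}\otimes k(p)$ with the spaces of sections vanishing at $p$, and conclude fiberwise surjectivity directly from Definition \ref{def-sectionDominating}. The only difference is that you spell out the Nakayama step and the compatibility of the fiber identification with the multiplication maps, which the paper leaves implicit.
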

\begin{proof}
Every section of $\eE \otimes L^{\vee}$ induces a map $M_{L_i} \to M_{\eE}$. Let $s_i = h^0(\eE \otimes L_i^{\vee})$. If we choose a basis for $H^0(\eE \otimes L_i^{\vee})$, we obtain a map $M_{L_i}^{\oplus s_i} \to M_{\eE}$. Set
$s= \max_{1 \leq i \leq u} s_i$. We obtain a map $\bigoplus_{i=1}^u M_{L_i}^{\oplus s} \to M_{\eE}$. The fiber of $M_{\eE}$ at a point $p$ is the vector space of sections of $\eE$ vanishing at $p$. Similarly, the fiber of $M_{L_i}$ at a point $p$ is the vector space of sections of $L_i$ vanishing at $p$. Since by assumption,  \[ \bigoplus_{i=1}^u (H^0(L_i \otimes \II_p) \otimes H^0(\eE \otimes L_i^{\vee})) \to H^0(\eE \otimes \II_p) \] is surjective for every point $p\in A$, we conclude that this map is surjective. 
\end{proof}

\subsection{The case of threefolds} Throughout this subsection we assume that  $A$ is a threefold with a Zariski open subset $A_0$ which is homogeneous under an algebraic group action $G$. Let $\eE$ be a very ample line bundle on $A$ invariant under $G$ and let $X$ be the zero locus of a very general section of $\eE$. Let $\xX$ and $\yY$ be the corresponding universal families and let $\kK$ be the quotient of $h^* T_{\xX/A}$ by $T_{\yY/A}$  constructed in the setup above. Since $A$ is a threefold, $\kK|_C$ must be rank 1, which simplifies our proofs and allows us to apply Proposition \ref{prop-smallBundle} to get a genus bound for curves on a very general section of $\eE$. We start with the following observation.

\begin{proposition}
\label{cor-degImBundle}
Given a surjection from $M_L$ to a line bundle $N$ on a curve $C$, we have that $\deg N \geq - \deg L|_C$.
\end{proposition}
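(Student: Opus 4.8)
The plan is to reduce everything to the elementary fact that a subsheaf of a trivial bundle on a smooth curve has non-positive degree, by dualizing the defining sequence of $M_L$. Recall that $M_L$ sits in the exact sequence $0 \to M_L \to V \otimes \OO_A \to L \to 0$ with $V = H^0(A,L)$; since $L$ is globally generated this is a sequence of vector bundles, so it remains exact after restriction to $C$ and after dualizing. Dualizing gives
\[ 0 \to L^\vee \to V^\vee \otimes \OO_A \to M_L^\vee \to 0, \]
which exhibits $M_L^\vee$ as a quotient of a trivial bundle. Restricting to $C$, I obtain a surjection $\psi \colon V^\vee \otimes \OO_C \twoheadrightarrow M_L^\vee|_C$ with kernel $L^\vee|_C$.

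First I would dualize the hypothesis. The surjection $M_L \to N$ on $C$ is a surjection of vector bundles on the smooth curve $C$ (its kernel is locally free, being the kernel of a surjection onto the locally free sheaf $N$), so dualizing produces a subbundle inclusion $N^\vee \hookrightarrow M_L^\vee|_C$. Next I would pull this subbundle back along $\psi$. Let $W = \psi^{-1}(N^\vee) \subseteq V^\vee \otimes \OO_C$; as the preimage of a subbundle under a surjection of bundles, $W$ is itself a subbundle, and it fits into the exact sequence
\[ 0 \to L^\vee|_C \to W \to N^\vee \to 0, \]
so that $\deg W = -\deg L|_C - \deg N$. Finally, since $W$ is a subsheaf of the trivial bundle $V^\vee \otimes \OO_C$, its determinant injects into a trivial bundle and hence has non-positive degree; therefore $\deg W \leq 0$. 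Combining this with the computation of $\deg W$ yields $\deg N \geq -\deg L|_C$, as desired.

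The argument is short, and I do not anticipate a serious obstacle; the only points that require care are the exactness of restriction and dualization—which hold because $L$ is globally generated, making all the sheaves involved locally free—and the degree bound $\deg W \leq 0$. For the latter, I would saturate $W$ to a subbundle $\bar W$ of $V^\vee \otimes \OO_C$ with $\deg W \le \deg \bar W$, observe that $\det \bar W$ maps injectively into $\wedge^{\operatorname{rk} W}(V^\vee \otimes \OO_C) \cong \OO_C^{\oplus m}$, and conclude that $(\det \bar W)^\vee$ has a nonzero global section, forcing $\deg \det \bar W \le 0$. The whole proof is really a packaging of the statement that $M_L^\vee$ is globally generated, dualized into the statement that subsheaves of trivial bundles have non-positive degree.
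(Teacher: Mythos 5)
Your proof is correct, but it takes a genuinely different route from the paper's. The paper's proof never dualizes: it takes the second wedge power of the defining sequence $0 \to M_L \to H^0(L) \otimes \OO \to L \to 0$, which (since $L$ has rank one) gives $0 \to \wedge^2 M_L \to \wedge^2\bigl(H^0(L) \otimes \OO\bigr) \to M_L \otimes L \to 0$; thus $M_L \otimes L$ is a quotient of a trivial bundle, hence globally generated, hence so is its quotient $N \otimes L$, and a globally generated line bundle on a curve has non-negative degree. You instead exhibit $M_L^\vee$ as a quotient of a trivial bundle, dualize the surjection $M_L|_C \to N$ into a subbundle inclusion $N^\vee \hookrightarrow M_L^\vee|_C$, and take the preimage $W$ of $N^\vee$ in $V^\vee \otimes \OO_C$, which is an extension of $N^\vee$ by $L^\vee|_C$; the bound then follows from the fact that subsheaves of trivial bundles on a smooth curve have non-positive degree. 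The two arguments exploit the same positivity (the trivial middle term of the Lazarsfeld--Mukai sequence) and are essentially dual to one another --- note that $\deg W = -\deg(N \otimes L|_C)$, so your key inequality $\deg W \le 0$ is literally the paper's inequality $\deg (N\otimes L|_C) \ge 0$ in dual form. What each buys: the paper's version is shorter, needs no saturation or preimage bookkeeping, and works verbatim on an arbitrary integral projective curve (global generation makes no smoothness demands), while the wedge-power trick is the standard device in this literature for extracting positivity of $M_L \otimes L$; your version avoids exterior powers entirely and reduces to the most elementary degree bound available, at the cost of needing $C$ smooth (or a pullback to the normalization) so that kernels, duals, and subbundles behave well. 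Your points of care --- exactness of restriction and dualization for locally free sheaves, local freeness of the kernel of $M_L|_C \to N$, and the saturation step --- all check out.
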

\begin{proof}
We have the short exact sequence
\[ 0 \to M_L \to \OO \otimes H^0(L) \to L \to 0 .  \]
Taking the second wedge power of the sequence, we get
\[ 0 \to \wedge^2 M_L \to \wedge^2 (\OO \otimes H^0(L)) \to M_L(L) \to 0 .  \]
Thus, $M_L(L)$ is globally generated, and hence, so is $N(L)$. Since the degree of $N(L)$ must be non-negative, we have $\deg N \geq - \deg L|_C$.
\end{proof}

Since $A$ is a threefold, $X$ will be a surface, so the normal sheaf of a curve in $X$ will have rank 1. We can use the above discussion to obtain the following result. 

\begin{corollary}\label{cor-main}
Let $L_1, \dots, L_k$ be a collection of section-dominating line bundles for $\eE$ on $A$. Then for any genus $g$ curve $f:C \to X$ intersecting $A_0$, $$2g-2-K_X \cdot C = \deg N_{f/X} \geq - \max_i \{ \deg L_i|_C \}.$$ In particular, if there exists $\epsilon >0$ such that $K_X \cdot C \geq (1+\epsilon) \deg L_i|_C$ for all curves $C$ and indices $i$ and $(A\setminus A_0) \cap X$ does not contain any  curves of genus $0$ or $1$, then $X$ is  algebraically hyperbolic.
\end{corollary}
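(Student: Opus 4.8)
The plan is to assemble Corollary \ref{cor-main} from the pieces already established in the excerpt, which align almost perfectly into the desired chain of inequalities. The main equality $2g-2-K_X\cdot C = \deg N_{f/X}$ is exactly the first statement of Lemma \ref{lem-degreeGenus} (applied to a general fiber $Y_t \cong C$), so the only genuine content is the lower bound $\deg N_{f/X} \geq -\max_i\{\deg L_i|_C\}$ together with the algebraic hyperbolicity conclusion.

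For the lower bound, I would first reduce to the sheaf $\kK$. By Lemma \ref{lem-degreeGenus} we have $\deg N_{h_t/X_t} \geq \deg(\kK|_{Y_t})$, so it suffices to bound $\deg(\kK|_C)$ from below. Here is where the threefold hypothesis enters: since $A$ is a threefold, $X$ is a surface and $\kK|_C$ is a rank-one sheaf, as noted in the setup of this subsection. Next I would invoke Proposition \ref{prop-smallBundle}: the section-dominating collection $L_1,\dots,L_k$ yields a surjection $\bigoplus_i M_{L_i}^{\oplus s} \to M_\eE$. Composing with the surjection $h^*\pi_2^* M_\eE = h^* T_{\xX/A} \to \kK$ from Proposition \ref{prop-summary}(2)--(3) and restricting to $C$, we obtain a surjection $\bigoplus_i (h^* M_{L_i}|_C)^{\oplus s} \to \kK|_C$ onto the rank-one sheaf $\kK|_C$ (which, up to the torsion issue from Proposition \ref{prop-summary}(3), we may treat as a line bundle $N$ on $C$). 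Since the source is a direct sum, at least one summand $M_{L_i}|_C$ must surject onto $N$, and Proposition \ref{cor-degImBundle} then gives $\deg N \geq -\deg L_i|_C \geq -\max_i\{\deg L_i|_C\}$. Chaining these inequalities produces the stated bound.

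For the second assertion, I would unwind the definition of algebraic hyperbolicity. Suppose the hypothesis $K_X\cdot C \geq (1+\epsilon)\deg L_i|_C$ holds for all $i$. Combining with the first part and rearranging, $2g-2 = K_X\cdot C + \deg N_{f/X} \geq K_X\cdot C - \max_i\{\deg L_i|_C\} \geq K_X\cdot C - \tfrac{1}{1+\epsilon} K_X\cdot C = \tfrac{\epsilon}{1+\epsilon} K_X\cdot C$. Taking $H = K_X$ (or the relevant ample class comparable to $K_X$) gives the required inequality $2g-2 \geq \epsilon'\deg_H C$ for curves meeting $A_0$. The remaining curves are those contained in $(A\setminus A_0)\cap X$; the hypothesis that this locus contains no curves of genus $0$ or $1$ handles them, since every such curve has $2g-2 \geq 0$ and one only needs a uniform $\epsilon'$ after possibly shrinking it.

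The step I expect to require the most care is the passage from the surjection onto the torsion-affected sheaf $\kK|_C$ to a clean surjection of a single $M_{L_i}|_C$ onto an honest line bundle $N$. Proposition \ref{prop-summary}(3) only guarantees that $\kK$ injects into $N_{h/\xX}$ with torsion cokernel, so when $A_0 \neq A$ the identification of $\kK|_C$ with a line bundle, and the assertion that the direct-sum surjection localizes to one summer surjecting onto it, needs the genericity of $C$ and the torsion-freeness arguments already invoked in the proof of Proposition \ref{prop-summary}. The inequality $\deg N_{f/X}\geq \deg(\kK|_C)$ absorbs precisely this torsion, so I would lean on Lemma \ref{lem-degreeGenus} to avoid tracking the torsion explicitly, and treat the factoring-through-a-summand step as the one point demanding genuine justification rather than routine bookkeeping.
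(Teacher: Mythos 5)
Your proposal is correct and follows essentially the same route as the paper's own proof: reduce via Lemma \ref{lem-degreeGenus} to bounding $\deg \kK|_C$, use Propositions \ref{prop-summary} and \ref{prop-smallBundle} to obtain a generically surjective map from a direct sum of the $M_{L_i}|_C$ onto the rank-one sheaf $\kK|_C$, isolate a single summand that surjects generically, and apply Proposition \ref{cor-degImBundle}. The only difference is that you also spell out the ``in particular'' deduction of algebraic hyperbolicity (treating the finitely many curves inside $(A\setminus A_0)\cap X$ separately, where one should note the excluded genera give $2g-2\geq 2$ rather than merely $2g-2\geq 0$), a step the paper leaves implicit.
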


\begin{proof}
To prove the statement, it will suffice to prove that $\deg N_{f/X} \geq - \max_i \{ \deg L_i|_C \}.$ By Lemma \ref{lem-degreeGenus}, it is enough to bound $\deg \kK|_{C}$, where we write $C$ instead of the more cumbersome $Y_t$. 

By Proposition \ref{prop-summary} part (2), $\kK|_{C}$ admits a surjection from $M_{\eE}|_{C}$. By Proposition \ref{prop-smallBundle}, we get a surjection from a direct sum of copies of $M_{L_i}$ onto $M_{\eE}$, and hence a surjection of a direction sum of copies of $M_{L_i}|_{C}$ onto $\kK|_{C}$. Since $\kK|_{C}$ has rank $1$, we can find a single $i$ such that $\alpha: M_{L_i}|_{C} \to \kK|_{C}$ is generically surjective. By Proposition \ref{cor-degImBundle}, $\deg \alpha(M_{L_i}|_{C}) \geq - \deg L_i|_{C}$. Putting it all together, we get $$\deg N_{f/X} \geq \alpha(M_{L_i}|_{C}) \geq - \deg L_i|_{C}.$$
\end{proof}

\begin{remark}
As in Haase and Ilten's Lemma 3.7 \cite{HaaseIlten}, Corollary \ref{cor-main} can be extended to Gorenstein threefolds that admit a crepant resolution.
\end{remark}

In the examples we consider, we will run across situations where we do not have an appropriate collection of section-dominating line bundles. There are other useful techniques for these cases.

\begin{lemma}\label{lem-torsion}
Let $L$  be a line bundle on $A$ such that $H^0( L^{\vee} \otimes \eE) \not= 0$. Assume that  the natural map $\bigoplus_i M_{L} \to M_{\eE}$ is not surjective and that the induced map $\bigoplus_i M_{L} \to \kK$ has torsion image. Then at a general point $(p,t)$ of $\yY$, $(T_{\yY/A})_{(p,t)}$ contains the image of the map $$H^0(L \otimes \II_p) \otimes H^0( L^{\vee}\otimes \eE) \to H^0(\eE \otimes \II_p).$$
\end{lemma}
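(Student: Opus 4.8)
The plan is to reduce everything to a fiberwise computation at a general point of $\yY$ and then read off the claimed inclusion directly from the commutative diagram of Proposition \ref{prop-summary}(3). First I would record the identifications furnished by the setup. By Proposition \ref{prop-summary}(2), $T_{\xX/A} \cong \pi_2^* M_{\eE}$, so the fiber of $h^* T_{\xX/A}$ at a point $(p,t) \in \yY$ (where I write $p$ for the image point $\pi_2(h(p,t)) \in A$) is the vector space $M_{\eE,p} = H^0(\eE \otimes \II_p)$ of sections of $\eE$ vanishing at $p$, and the fiber of $h^* \pi_2^* M_L$ is $H^0(L \otimes \II_p)$. Taking the index set of $\bigoplus_i M_L$ to be a basis $\phi_1, \dots, \phi_s$ of $H^0(L^{\vee} \otimes \eE)$, the pulled-back map $\bigoplus_i h^* \pi_2^* M_L \to h^* T_{\xX/A}$ induces on fibers at $(p,t)$ the map $\bigoplus_i H^0(L \otimes \II_p) \to H^0(\eE \otimes \II_p)$ sending $(\sigma_i)$ to $\sum_i \phi_i \sigma_i$, whose image is exactly the image of the multiplication map $H^0(L \otimes \II_p) \otimes H^0(L^{\vee} \otimes \eE) \to H^0(\eE \otimes \II_p)$ in the statement.

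Next I would use the bottom row of the diagram in Proposition \ref{prop-summary}(3), the short exact sequence $0 \to T_{\yY/A} \to h^* T_{\xX/A} \to \kK \to 0$, to view $(T_{\yY/A})_{(p,t)}$ as a subspace of $H^0(\eE \otimes \II_p)$. Since $T_{\yY/A}$ is torsion-free and the ambient varieties are smooth, over a dense open subset of $\yY$ the subsheaf $T_{\yY/A}$ is a subbundle of $h^* T_{\xX/A}$ and $\kK$ is locally free there; restricting to a general point $(p,t)$ in this locus, which moreover maps into $A_0$ because $\pi_2 \circ h$ dominates $A_0$, passage to fibers preserves exactness and gives $\kK_{(p,t)} = H^0(\eE \otimes \II_p) / (T_{\yY/A})_{(p,t)}$.

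Finally I would feed in the hypotheses. The assumption that $\bigoplus_i M_L \to \kK$ has torsion image says that the composite $\bigoplus_i h^* \pi_2^* M_L \to h^* T_{\xX/A} \to \kK$ has image supported on a proper closed subset of $\yY$; shrinking the dense open set above, a general $(p,t)$ avoids this support, so the composite is zero on fibers there. Combining this with the fiberwise description, the image of $H^0(L \otimes \II_p) \otimes H^0(L^{\vee} \otimes \eE)$ in $H^0(\eE \otimes \II_p)$ lies in the kernel of $H^0(\eE \otimes \II_p) \to \kK_{(p,t)}$, which by the previous step is exactly $(T_{\yY/A})_{(p,t)}$, giving the desired inclusion.

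The point requiring the most care is the transition from sheaves to fibers: one must select the general point $(p,t)$ in the common open locus where $T_{\yY/A}$ is a subbundle of $h^* T_{\xX/A}$ (so that the cokernel sequence remains exact on fibers and $(T_{\yY/A})_{(p,t)}$ is genuinely a subspace of $H^0(\eE \otimes \II_p)$) and simultaneously off the torsion support of the image of $\bigoplus_i M_L$ in $\kK$. The hypothesis $H^0(L^{\vee} \otimes \eE) \neq 0$ merely guarantees that the maps $M_L \to M_{\eE}$ exist; the substance is that generic vanishing of the composite into $\kK$ forces the multiplication image into the vertical tangent space, which is precisely what the fiberwise chase yields.
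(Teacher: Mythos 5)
Your proposal is correct and follows essentially the same route as the paper's own (much terser) proof: both use Proposition \ref{prop-summary}(2) to identify $h^*T_{\xX/A}$ fiberwise with $H^0(\eE \otimes \II_p)$, observe that the torsion hypothesis forces the composite $\bigoplus_i M_L \to \kK$ to vanish at a general point, and conclude via the exact sequence $0 \to T_{\yY/A} \to h^*T_{\xX/A} \to \kK$ that the multiplication image lands in $(T_{\yY/A})_{(p,t)}$. Your added care about the open locus where $T_{\yY/A}$ is a subbundle and fibers of the sequence stay exact is a legitimate elaboration of what the paper leaves implicit.
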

\begin{proof}
By Proposition \ref{prop-summary} (2),  $T_{\xX/A} \cong \pi_2^* M_{\eE}$. If at a general point $p$ of the curve $Y_t$, $(M_L)_p \to (M_{\eE})_p$ maps to $0$ in $\kK_p$, then  the image of $(\pi_2^* M_L)_p$ must lie in $(T_{\yY/A})_p$. The result follows.
\end{proof}

\begin{lemma}
\label{lem-unionOfFibers}
Let $p \in  A_0$ and let $Z = \pi_2^{-1}(p)$, where  $\pi_2: \xX \to A$. Let $T$ be a subvariety of $A$ containing $p$.  Assume that $H^0(\II_T \otimes \eE)$ is contained in $(T_{\yY/A})_{(p,f)}$ for general $(p,f) \in h^{-1}(Z)$. Then $W = h(\yY) \cap Z$ is a union of fibers of the map $\beta: Z \to H^0(\eE|_T)$ given by sending $(p,f)$ to $f|_{T}$.
\end{lemma}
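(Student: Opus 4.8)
The plan is to read both the hypothesis and the conclusion as statements about tangent spaces inside the fiber $Z=\pi_2^{-1}(p)$. A point of $Z$ records a section $f$ of $\eE$ vanishing at $p$, so $Z$ is étale over the hyperplane $\{g(p)=0\}=H^0(\eE\otimes\II_p)\subseteq V$, with tangent space $T_fZ=H^0(\eE\otimes\II_p)$; since being a union of fibers of $\beta$ is local on $Z$, I may treat $Z$ as this linear space. Restriction to $T$ sits in the sequence $0\to\II_T\otimes\eE\to\eE\to\eE|_T\to0$, and as $p\in T$ we have $H^0(\II_T\otimes\eE)\subseteq H^0(\eE\otimes\II_p)$. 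The map $\beta$ is the restriction to $Z$ of the linear map $g\mapsto g|_T$, so $d\beta_f$ is that same linear map on $T_fZ$ and $\ker d\beta_f=H^0(\II_T\otimes\eE)$. Hence the fibers of $\beta$ are the parallel cosets $f+F$ with $F:=H^0(\II_T\otimes\eE)$, and $F$ is their common (constant) tangent space. Consequently, proving that $W$ is a union of fibers of $\beta$ is the same as proving that $W$ is invariant under translation by $F$.

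Next I would identify the hypothesis as an infinitesimal tangency condition on $W$. Set $\Phi=\pi_2\circ h\colon\yY\to A$, so that $h^{-1}(Z)=\Phi^{-1}(p)$ and $W=h(h^{-1}(Z))$. Because $\Phi$ dominates $A_0$ and we work in characteristic zero, generic smoothness lets me take $p\in A_0$ general so that $\Phi$ is smooth along the entire fiber $\Phi^{-1}(p)$; then $(T_{\yY/A})_{(p,f)}=\ker(d\Phi)_{(p,f)}=T_{(p,f)}h^{-1}(Z)$ for every $(p,f)\in h^{-1}(Z)$. Under the injection $T_{\yY/A}\hookrightarrow h^*T_{\xX/A}$ of Proposition \ref{prop-summary}(3), together with the isomorphism $T_{\xX/A}\cong\pi_2^*M_{\eE}$ giving $(T_{\xX/A})_{(p,f)}=H^0(\eE\otimes\II_p)=T_fZ$, this fiber is carried by $dh$ into $T_fZ$. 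Since $h$ is generically injective, $h^{-1}(Z)\to W$ is birational onto $W$, so at a general point $dh$ identifies $(T_{\yY/A})_{(p,f)}$ with $T_fW$. Thus the hypothesis $H^0(\II_T\otimes\eE)\subseteq(T_{\yY/A})_{(p,f)}$ becomes $F\subseteq T_fW$ for all $f$ in a dense open subset of $W$.

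Finally I would promote this tangency to the desired global invariance. Each constant vector field $v\in F$ is then tangent to $W$ at every general point, so $D_v(I(W))\subseteq I(W)$ for the ideal of each component of $W$. In characteristic zero this forces $W$ to be stable under the translation flow of $v$: expanding $\exp(sD_v)^*g=\sum_{n\geq0}\tfrac{s^n}{n!}D_v^ng$ shows every such pullback lies in $I(W)$ whenever $g$ does. Letting $v$ range over a basis of $F$, I conclude that $W$ is invariant under translation by all of $F$, i.e. $W$ contains the coset $f+F=\beta^{-1}(\beta(f))$ through each of its general points. Therefore $W$ is a union of fibers of $\beta$, as claimed.

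The step I expect to be the main obstacle is the tangent-space identification of the second paragraph: one must justify carefully, using generic smoothness of $\Phi$ over a general $p\in A_0$ and the generic injectivity of $h$, that the fiber $(T_{\yY/A})_{(p,f)}$, read inside $H^0(\eE\otimes\II_p)$ via Proposition \ref{prop-summary}, really is the tangent space $T_fW$. Once that is in place, the passage from infinitesimal tangency to translation invariance is a routine characteristic-zero argument.
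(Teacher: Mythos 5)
Your proposal is correct and rests on the same core idea as the paper's proof: read the hypothesis as saying that $W$ is tangent, at its general points, to the fibers of $\beta$ (whose tangent spaces at every point are $H^0(\II_T\otimes\eE)$), and then use characteristic zero to promote this infinitesimal tangency to actual containment of fibers. The two arguments diverge only in how that promotion is carried out. The paper stays on $Z$ and invokes a generic-smoothness dichotomy: either $W$ is a union of fibers of $\beta$, or $W$ meets a general fiber transversely, and the tangency hypothesis rules out the second branch. You instead linearize via the \'etale map $Z\to H^0(\eE\otimes\II_p)$, so that the fibers of $\beta$ become parallel cosets of $F=H^0(\II_T\otimes\eE)$, and then integrate constant vector fields ($D_v I(W)\subseteq I(W)$ implies translation invariance). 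Your route buys explicitness at two points the paper passes over in one clause: the identification of $(T_{\yY/A})_{(p,f)}$ with the tangent space of $W$ (via generic smoothness of $\pi_2\circ h$ and birationality of $h^{-1}(Z)\to W$, which the paper asserts ``by assumption''), and the precise mechanism by which tangency forces fiber containment. The one soft spot is your claim that being a union of fibers of $\beta$ is ``local on $Z$'': since $Z$ is only \'etale over, not isomorphic to, the linear space, your flow argument literally shows that the image of $W$ is a union of cosets, and recovering full $\beta$-fibers (preimages of cosets) requires the same generic/birational gloss that the paper's own dichotomy also leaves implicit; so this is a shared imprecision rather than a gap relative to the paper. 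One further small remark: you take $p\in A_0$ general so that generic smoothness applies along $\Phi^{-1}(p)$, while the lemma is stated for every $p\in A_0$; this is harmless because $\yY$ and $\xX$ are $G$-invariant and $A_0$ is homogeneous, so all points of $A_0$ are equivalent, but it is worth saying.
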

\begin{proof}
There is a natural map $\beta : Z \rightarrow H^0(\eE|_{T})$ defined by sending a section $f$ of $\eE$ to the restriction of $f$ to $T$. The tangent space to a fiber of this map (at any point) is  $H^0(\II_{T} \otimes \eE)$. By generic smoothness, $Z$ is reduced because $\xX$ is. Since the map $\yY \to \xX$ is birational onto its image,  $h^{-1}(Z) \to Z$ is also  birational onto its image. Since $W = h(\yY) \cap Z$ is reduced and the fibers of $\beta$ sweep out $Z$, it follows from generic smoothness that either $W$ is a union of fibers of $\beta$ or that $W$ intersects a general fiber of $\beta$ transversely. However, $(T_{\yY/A})_{(p,f)}$ by assumption contains the tangent space to the fiber $H^0(\II_{T} \otimes \eE)$. Hence, $W$ does not intersect the general fiber of $\beta$ transversely, so the result follows.
\end{proof}

Finally, considering scrolls that contain $Y_t$ we can improve the genus bounds.  Suppose we have a nontrivial map from $M_L \to N_{h_t/X}$. Then the degree of $N_{h_t/X}$ is at least the degree of the image of this map, so need to  bound the possible degrees of $1$-dimensional quotients of $M_L$. This is closely related to the scrolls that $h_t(Y_t)$ lies on. 

\begin{lemma}
\label{lem-scrolls}
Let $L$ be a base-point-free line bundle on the threefold $A$ and let $\phi:A \rightarrow \PP^n$ be the morphism induced by the complete linear system $|L|$. 
 Then a rank one quotient $Q$ of $M_L|_{Y_t}$ induces a surface scroll $\PP(Q')$ over $Y_t$ and a map  $\PP(Q') \to \PP^n$ whose $\phi^*(\OO_{\PP^n}(1))$-degree is  equal to $\deg Q+\deg L|_{Y_t}$.
\end{lemma}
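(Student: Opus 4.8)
The plan is to unwind the definition of the Lazarsfeld-Mukai bundle $M_L$ and see how a rank-one quotient $Q$ of its restriction to $Y_t$ geometrically encodes a scroll mapping to $\PP^n$. First I would recall the defining sequence
\[
0 \to M_L \to H^0(L) \otimes \OO_A \to L \to 0,
\]
so that the fiber of $M_L$ at a point $p$ is precisely the space $H^0(L \otimes \II_p)$ of sections of $L$ vanishing at $p$. Restricting to $Y_t$ and dualizing the surjection $M_L|_{Y_t} \to Q$, I would set $Q' = Q^{\vee}$, an inclusion of line bundles $Q' \hookrightarrow M_L^{\vee}|_{Y_t}$, and form the projectivized bundle $\PP(Q')$ over $Y_t$; since $Q'$ is a line bundle this is tautologically a $\PP^0$ over each point, so I would instead interpret the scroll as follows.

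The key observation is that $M_L^{\vee}$ sits in the dual sequence
\[
0 \to L^{\vee} \to H^0(L)^{\vee} \otimes \OO_A \to M_L^{\vee} \to 0,
\]
and a surjection $M_L|_{Y_t} \to Q$ corresponds, after twisting, to a sub-line-bundle of the trivial bundle $H^0(L)^{\vee} \otimes \OO_{Y_t}$ whose fibers span a pencil of hyperplanes in $|L|$, i.e. a $\PP^1$-worth of sections. The plan is therefore to produce, for each point $y \in Y_t$, a line in $\PP^n = \PP(H^0(L)^{\vee})$ — equivalently a codimension-one family of hyperplane sections through $\phi(h_t(y))$ — so that the union of these lines sweeps out a ruled surface, the scroll $\PP(Q')$, with a natural map to $\PP^n$. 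The ruling over $Y_t$ makes $\PP(Q')$ a surface, and the map to $\PP^n$ is induced by the composite $\PP(Q') \to \PP(M_L^{\vee}|_{Y_t}) \to \PP(H^0(L)^{\vee} \otimes \OO) = Y_t \times \PP^n \to \PP^n$.

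Next I would compute the $\phi^*\OO_{\PP^n}(1)$-degree of this map. The pullback of $\OO_{\PP^n}(1)$ under the composite is the relative tautological bundle $\OO_{\PP(Q')}(1)$, and its degree on the scroll is computed by pushing forward to $Y_t$: one gets $\deg \OO_{\PP(Q')}(1) = \deg(\text{the relevant summand})$, which after tracking the twist by $L$ in the sequence above works out to $\deg Q + \deg L|_{Y_t}$. Concretely, the tautological quotient receives a contribution $\deg Q$ from the quotient line bundle itself and a contribution $\deg L|_{Y_t}$ from the twist relating $M_L$ to $M_L(L)$, matching the global generation statement already used in Proposition \ref{cor-degImBundle}. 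The main obstacle I anticipate is bookkeeping the twists correctly — in particular keeping straight whether the scroll is built from $Q$ or its dual and ensuring the identification of $\phi^*\OO_{\PP^n}(1)$ with the tautological class produces $+\deg L|_{Y_t}$ rather than $-\deg L|_{Y_t}$; the cleanest way to pin this down is to test the formula on the trivial case $Q = M_L|_{Y_t}/(\text{kernel of evaluation})$, where the scroll is literally the image of $Y_t$ under $\phi \circ h_t$ together with its secant or tangent structure, and check the degree directly against $\deg L|_{Y_t}$.
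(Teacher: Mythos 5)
Your geometric picture --- one line in $\PP^n$ through $\phi(h_t(y))$ for each $y \in Y_t$, sweeping out a ruled surface --- is the right one, and it is exactly the dual of the paper's construction. But the linear algebra that is supposed to produce this scroll is never carried out correctly, and that is where the content of the lemma lies. Dualizing the surjection $M_L|_{Y_t} \to Q$ gives a sub-line-bundle $Q^{\vee} \hookrightarrow M_L^{\vee}|_{Y_t}$; however, $M_L^{\vee}|_{Y_t}$ is a \emph{quotient} of $H^0(L)^{\vee} \otimes \OO_{Y_t}$ (dualize the defining sequence of $M_L$), not a subsheaf of it, so you never obtain a ``sub-line-bundle of the trivial bundle,'' and your composite $\PP(Q') \to \PP(M_L^{\vee}|_{Y_t}) \to \PP(H^0(L)^{\vee} \otimes \OO_{Y_t})$ is not defined in any single convention: in the classical (sub-line) convention the second arrow would need $M_L^{\vee}$ to be a subbundle of the trivial bundle, while in the Grothendieck (quotient) convention the first arrow would need a surjection $M_L^{\vee}|_{Y_t} \to Q'$, and you have neither. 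Worse, a rank-one subsheaf of $H^0(L)^{\vee} \otimes \OO_{Y_t}$ assigns a \emph{point} of $\PP^n$ to each $y \in Y_t$, giving a curve, not a pencil of hyperplanes per point and not a surface. The object you need has rank two: set $S = \ker\bigl(M_L|_{Y_t} \to Q\bigr)$, a corank-one subsheaf of $M_L|_{Y_t} \subset H^0(L) \otimes \OO_{Y_t}$, and let $Q' = \bigl(H^0(L) \otimes \OO_{Y_t}\bigr)/S$; this is the paper's $Q'$, and its dual is the rank-two annihilator $S^{\perp} \subset H^0(L)^{\vee} \otimes \OO_{Y_t}$ whose fibers are precisely your pencils. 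Then $\PP(Q')$ is a $\PP^1$-bundle over $Y_t$, and the surjection $H^0(L) \otimes \OO_{Y_t} \to Q'$ induces $\PP(Q') \to \PP^n$ by the universal property of projective space.

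The degree claim is likewise asserted rather than derived: ``the relevant summand'' and ``tracking the twist'' never identify where $\deg L|_{Y_t}$ comes from. It comes from the exact sequence
\[
0 \to Q \to Q' \to L|_{Y_t} \to 0,
\]
which holds because $Q'/Q \cong \bigl(H^0(L) \otimes \OO_{Y_t}\bigr)/\bigl(M_L|_{Y_t}\bigr) \cong L|_{Y_t}$ by the defining sequence of $M_L$ restricted to $Y_t$ (equivalently, the pulled-back Euler sequence, which is how the paper phrases it). Hence $\deg Q' = \deg Q + \deg L|_{Y_t}$, and the $\phi^*\OO_{\PP^n}(1)$-degree of $\PP(Q') \to \PP^n$ equals $\deg Q'$, since the tautological class $\xi = c_1(\OO_{\PP(Q')}(1))$ is the pullback of the hyperplane class and satisfies $\xi^2 = \pi^*c_1(Q') \cdot \xi$ on a $\PP^1$-bundle over a curve. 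Your appeal to the global generation of $M_L(L)$ from Proposition \ref{cor-degImBundle} is only a consistency check (it shows the scroll degree is nonnegative), not a computation. So the proposal is salvageable, and once repaired it coincides with the paper's proof, but as written both the construction of the scroll and the degree count have genuine gaps.
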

\begin{proof}
Consider the Euler sequence for $\PP^n$
\[ 0 \to \Omega_{\PP^n}(1) \to \OO_{\PP^n}^{n+1} \to \OO_{\PP^n}(1) \to 0 . \]
Pulling the sequence back to $Y_t$ and observing that $\Omega_{\PP^n}(1)$ is  $M_L|_{Y_t}$, we obtain
\[ 0 \to M_L|_{Y_t} \to \OO_{Y_t}^{n+1} \to L|_{Y_t} \to 0 . \]
The quotient $Q$ of $M_L|_{Y_t}$ gives rise to a subsheaf $S$ of $M_L|_{Y_t} \subset \OO_{Y_t}^{n+1}$. Taking the quotient of $\OO_{Y_t}^{n+1}$ by $S$ gives us a sheaf $Q'$ of degree equal to $\deg Q+\deg L|_{Y_t}$. The result follows from the universal property of projective space.
\end{proof}

\section{Examples}\label{sec-examples}
In this section, we apply the technique described in \S \ref{sec-preliminaries} to specific examples.

\subsection{A very general surface in $\PP^1 \times \PP^1 \times \PP^1$}
The variety $\PP^1  \times \PP^1 \times \PP^1$ admits three natural projections  $\pi_i$, $1 \leq i \leq 3$, to $\PP^1$. Let $H_i= \pi_i^* \OO_{\PP^1}(1)$.  Then $$\Pic(\PP^1  \times \PP^1 \times \PP^1) = \ZZ H_1 \oplus \ZZ H_2 \oplus \ZZ H_3 \quad \mbox{and} \quad K_{\PP^1  \times \PP^1 \times \PP^1}= -2H_1 - 2H_2 - 2H_3.$$ Let $a_1, a_2, a_3$ be nonnegative integers and let $X$ be a very general surface of class $\sum_{i=1}^3 a_i H_i$. By the generalized Noether-Lefschetz Theorem \cite{ravindraSrinivas}, if $a_i \geq 2$ for all $i$, then the natural restriction map $\Pic(\PP^1 \times \PP^1 \times \PP^1) \to \Pic(X)$ is an isomorphism. Hence, the class of any curve on $X$ can be written as $\OO_X(c_1, c_2, c_3)$. The degree of a curve $C$ on the class $\OO_X(c_1, c_2, c_3)$ with respect to the ample class $H= H_1 + H_2 + H_3$ is 
$$a_1 c_2 + a_1 c_3 + a_2 c_1 + a_2 c_3 + a_3 c_1 + a_3 c_2.$$

\begin{lemma}
Let $a_1 \leq a_2 \leq a_3$ be positive integers such that either  $a_1 =1$ or $a_1=2$ and $a_2 \leq 3$. Then a surface $X$ with class $\sum_{i=1}^3 a_i H_i$ is not algebraically hyperbolic.
\end{lemma}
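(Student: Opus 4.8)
The plan is to exhibit on $X$ a single integral curve of geometric genus $\le 1$. Any such curve $C$ has $2g(C)-2\le 0$, so it violates the defining inequality $2g(C)-2\ge \epsilon\deg_H(C)$ for every $\epsilon>0$ and every ample class $H$; producing one therefore shows that $X$ is not algebraically hyperbolic.

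The curves I would use are the fibers of the projection $\pi\colon X\to\PP^1$ onto the third factor. The fiber $X_t$ over $t\in\PP^1$ is the slice $X\cap(\PP^1\times\PP^1\times\{t\})$, a curve of bidegree $(a_1,a_2)$ in the first two factors, so its arithmetic genus is $(a_1-1)(a_2-1)$. The key observation is that the hypothesis (either $a_1=1$, or $a_1=2$ and $a_2\le 3$) is exactly equivalent to $(a_1-1)(a_2-1)\le 2$; this is the numerical content of the lemma. When $a_1=1$ the slices are smooth rational $(1,a_2)$ curves, and a general one is an integral genus $0$ curve on $X$. When $(a_1,a_2)=(2,2)$ a general slice is a smooth $(2,2)$ curve, i.e. an elliptic curve. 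In both cases a general fiber already does the job.

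The remaining, and only delicate, case is $(a_1,a_2)=(2,3)$, where the general fiber is a smooth curve of genus $2$, so I must instead produce a singular fiber of geometric genus $1$. Here I would use that $a_1=2$ exhibits $X$ as a double cover $\rho\colon X\to\PP^1\times\PP^1$ of the last two factors, branched over a curve $B$ of bidegree $(2a_2,2a_3)=(6,2a_3)$. Under $\rho$ the slice $X_t$ is the double cover of the ruling $R_t=\{z=t\}$ branched along $B\cap R_t$, six points for general $t$. When $R_t$ is simply tangent to $B$ two of these branch points collide, and the slice acquires a single node whose normalization is the double cover of $\PP^1$ branched at the remaining four points --- an elliptic curve. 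Such a tangent ruling exists because the projection $B\to\PP^1$ onto the third factor is a degree-$6$ map from a curve of positive genus $(2a_2-1)(2a_3-1)$ and is therefore ramified; a simple ramification point of this map is exactly a simple tangency of some $R_t$ with $B$.

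The main thing to verify carefully is this last step: that for very general $X$ the branch curve $B$ is smooth and irreducible and that a tangent ruling meets it in exactly one double point together with four distinct simple points, so that the resulting slice is an irreducible $1$-nodal curve of geometric genus precisely $1$ (rather than something more degenerate or reducible). This is a genericity statement that I expect to follow from the very general hypothesis, equivalently from $\pi$ being a Lefschetz pencil. A degeneration-free fallback that avoids pinning down the tangency is also available: a smooth family of genus-$2$ curves over $\PP^1$ is isotrivial, so the non-isotrivial fibration $\pi$ must have a singular fiber, and every singular member of a bidegree-$(2,3)$ pencil contains an irreducible component of geometric genus at most $1$; taking such a component again yields a rational or elliptic curve on $X$.
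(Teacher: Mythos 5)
Your proposal is correct, and for the easy cases it matches the paper: the paper also uses birationality/rationality when $a_1=1$ and the fibers of $\pi_3$ when $(a_1,a_2)=(2,2)$. Where you genuinely diverge is the key case $(a_1,a_2)=(2,3)$. The paper's proof is a one-line degeneration argument: the classifying map $\PP^1 \to |\OO_{\PP^1\times\PP^1}(2,3)|$ is non-constant (since $a_3>0$), and the discriminant locus is an ample hypersurface there, so some fiber of $\pi_3$ is singular; any component of a singular member of a $(2,3)$ family has geometric genus at most $1$. Your primary argument instead exhibits the singular fiber via the double-cover structure: the branch curve $B$ of bidegree $(6,2a_3)$ maps with degree $6$ to the third $\PP^1$, and since a degree-$6$ cover of the simply connected $\PP^1$ cannot be \'{e}tale, some ruling meets $B$ non-transversally, producing a degenerate slice. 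This is a perfectly valid alternative; indeed it is arguably more robust, since it needs no genericity at all. On that note, the verification you flag as ``the main thing to verify'' is actually unnecessary: you do not need the tangency to be simple or the slice to be irreducible and $1$-nodal, because \emph{any} non-transverse intersection of $B$ with a ruling yields a slice all of whose integral components have geometric genus at most $1$ (components of a $(2,3)$ divisor have classes $(b_1,b_2)\le(2,3)$, so proper components have genus $\le 1$, and a singular integral $(2,3)$ curve has geometric genus $\le 1$), which is all the lemma requires.

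One genuine caveat concerns your fallback argument: it asserts that the fibration $\pi_3$ is non-isotrivial, and this does not follow merely from the classifying map being non-constant. For instance, moving a fixed smooth $(2,3)$ curve by a one-parameter group of automorphisms of $\PP^1\times\PP^1$ gives a smooth, isotrivial, non-constant family; such total spaces do contain rational curves, but not as fibers, so the fallback as stated would fail to find them. Proving non-isotriviality for very general $X$ requires an extra (interpolation-type) argument, whereas the paper's ample-discriminant route and your primary double-cover route need only non-constancy, which is automatic. So rely on your primary argument; it closes the case cleanly.
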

\begin{proof}
If $a_1=1$, then the projection $\pi_{2,3}$ of $X$ onto the second and third factors gives a birational map between $X$ and $\PP^1 \times \PP^1$, hence $X$ is rational and not algebraically hyperbolic. If $a_1=a_2=2$, then the fibers of the projection $\pi_3$ exhibit a one-parameter family of elliptic curves on $X$. Finally, if $a_1=2$ and $a_2=3$, then the fibers of the projection $\pi_3$ exhibit a one-parameter family of $\OO_{\PP^1 \times \PP^1}(2,3)$ curves on $X$. Such a family necessarily has singular members. While the general member of the family has genus $2$,  the singular members have geometric genus  one or less. Hence, $X$ cannot be algebraically hyperbolic.  
\end{proof}

\begin{theorem}\label{thm-p1p1p1}
Let $X$ be a very general surface in $\PP^1 \times \PP^1 \times \PP^1$ of class $\sum_{i=1}^3 a_i H_i$. Assume that either $a_i \geq 3$ for all $i$ or that one of the $a_i$ is $2$ and the other two are at least $4$. Then $X$ is algebraically hyperbolic.
\end{theorem}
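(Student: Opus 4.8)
The plan is to apply Corollary \ref{cor-main} with the section-dominating collection $\{H_1, H_2, H_3\}$, which is valid since every $a_i > 0$ (as noted in the Example above). Since $A = \PP^1 \times \PP^1 \times \PP^1$ is homogeneous under $(\operatorname{PGL}_2)^3$, we have $A_0 = A$, so the hypothesis about curves of genus $0$ or $1$ in $A \setminus A_0$ is vacuous. By the generalized Noether--Lefschetz theorem \cite{ravindraSrinivas}, every curve $C \subset X$ has class $c_1 H_1 + c_2 H_2 + c_3 H_3$ with $c_i \geq 0$, and by adjunction $K_X = \sum_i (a_i - 2) H_i|_X$. Writing $d_i := H_i \cdot C$, the intersection form on $X$ gives $d_1 = a_3 c_2 + a_2 c_3$, $d_2 = a_3 c_1 + a_1 c_3$, $d_3 = a_2 c_1 + a_1 c_2$, so that $\deg_H C = d_1 + d_2 + d_3$ and $K_X \cdot C = \sum_i (a_i - 2) d_i$. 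By Corollary \ref{cor-main} we have $2g(C) - 2 \geq K_X \cdot C - \max_i d_i$, so it suffices to produce an $\epsilon > 0$ with $K_X \cdot C - \max_i d_i \geq \epsilon \deg_H C$ for every nonzero effective class.

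When all $a_i \geq 3$, I would argue directly. Since $a_i - 2 \geq 1$, we get $K_X \cdot C \geq \sum_i d_i$, hence $K_X \cdot C - \max_i d_i \geq \sum_{i \neq i_0} d_i$, where $d_{i_0} = \max_i d_i$. No two of the $d_i$ can vanish simultaneously (this would force $c_1 = c_2 = c_3 = 0$), so the right-hand side is strictly positive. As $K_X \cdot C - \max_i d_i$ and $\deg_H C$ are both positive and homogeneous of degree one in $(c_1, c_2, c_3)$, their ratio is continuous and strictly positive on the compact simplex $\{c_i \geq 0, \ \sum_i c_i = 1\}$, so it attains a positive minimum; this minimum is the desired $\epsilon$.

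When $a_1 = 2$ and $a_2, a_3 \geq 4$ (the remaining cases follow by the $S_3$-symmetry of $A$), the term $(a_1-2)d_1$ drops out, so $K_X \cdot C = (a_2 - 2)d_2 + (a_3 - 2)d_3$, and a short computation gives $K_X \cdot C - d_1 = 2(a_2 a_3 - a_2 - a_3)c_1 + (a_3 - 4)c_2 + (a_2 - 4)c_3$. This is $\geq 0$ but vanishes exactly where $c_1 = 0$ and $(a_3 - 4)c_2 = (a_2 - 4)c_3 = 0$. If $a_2, a_3 \geq 5$ this bad locus is trivial, one checks that $K_X \cdot C$ strictly dominates each $d_j$ for $C \neq 0$, and the compactness argument above goes through verbatim. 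The delicate situation is $a_2 = 4$ or $a_3 = 4$: here the ratio degenerates to zero along an entire edge (when $a_2 = a_3 = 4$, the whole edge $c_1 = 0$), so Corollary \ref{cor-main} yields only $g(C) \geq 1$ for curves with $c_1 = 0$, and nothing uniform for curves with bounded $c_1$ but large $c_2, c_3$.

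The main obstacle is precisely this family of ``fiber-type'' curves. Because $a_1 = 2$, the projection $\pi_{23} : X \to \PP^1 \times \PP^1$ to the last two factors is a double cover branched over a very general divisor $R$ of bidegree $(2a_2, 2a_3)$, and a curve $C$ with small $c_1$ projects to a curve $B$ of large bidegree. I would bound $g(C)$ by analyzing $C \to B$: when $C$ is stable under the covering involution it is the branched double cover of $B$, and Riemann--Hurwitz forces the genus up through the ramification along $B \cap R$; when $C$ maps birationally onto $B$, the very generality of $R$ severely restricts the curves over which the cover splits. To make this uniform I would invoke Lemma \ref{lem-torsion} and Lemma \ref{lem-unionOfFibers} to show that a curve on which $M_{H_2} \to \kK|_C$ (respectively $M_{H_3}$) is generically zero must lie in a union of fibers of an appropriate projection, and Lemma \ref{lem-scrolls} to control the rank-one quotients of $M_{H_2}|_C$ and $M_{H_3}|_C$, hence the scrolls containing $C$. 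Converting these geometric restrictions into the quantitative bound $2g(C) - 2 \geq \epsilon \deg_H C$ along the bad edge is the step I expect to require the most care.
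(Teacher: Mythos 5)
Your setup and your treatment of the first cases are sound: the homogeneity-plus-compactness argument on the simplex is a clean substitute for the paper's explicit inequalities, and it completely handles both the case $a_i \geq 3$ for all $i$ and the case $a_1 = 2$, $a_2, a_3 \geq 5$. However, the remaining case --- $a_1 = 2$ with one of $a_2, a_3$ equal to $4$ --- is precisely the new content of the theorem (it is among the open cases of Haase--Ilten), and there your proposal stops at a sketch with an acknowledged missing step. That is a genuine gap, and it stems partly from working with Corollary \ref{cor-main} in its weakest form: the bound $2g-2 \geq K_X\cdot C - \max_i d_i$ (writing $d_i = H_i \cdot C$ as you do) throws away the information of \emph{which} Lazarsfeld--Mukai bundle actually surjects, and that information is exactly what rescues the hard case.

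The paper's dichotomy is not by the numerical class of $C$ but by the bundle: since $\{H_1,H_2,H_3\}$ is section-dominating and $\kK|_C$ has rank one, some $M_{H_i}|_C \to \kK|_C$ is generically surjective, and then $2g-2 \geq K_X\cdot C - d_i$ for that particular $i$. If $i \in \{2,3\}$, say $i=2$, then with $a_1 = 2$ one gets $2g-2 \geq (a_2-3)d_2 + (a_3-2)d_3 \geq d_2 + 2d_3 \geq 2(c_1+c_2+c_3)$, which gives a uniform $\epsilon$ even when $a_2 = 4$ or $a_3 = 4$, because the troublesome term $-d_1$ never enters. So the only curves left are those for which \emph{both} $M_{H_2}$ and $M_{H_3}$ have torsion image in $N_{f/X}$ --- a condition on the curve, not on its class. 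For such curves, Lemma \ref{lem-torsion} places the ideal sheaf of the $\pi_{2,3}$-fiber through a general point inside $T_{\yY/A}$, Lemma \ref{lem-unionOfFibers} then makes $\yY$ a union of $\pi_{2,3}$-fibers, and invariance of $\yY$ under $\PP GL_2 \times \PP GL_2 \times \PP GL_2$ (the orbit of two distinct points of $\PP^1$ is dense) forces $C$ to meet each fiber in a single point, i.e.\ $C$ must be the ramification curve of the double cover $\pi_{2,3}\colon X \to \PP^1 \times \PP^1$. That is a single explicit curve on each surface, of class $(2a_2,2a_3)$ and genus $(2a_2-1)(2a_3-1) \geq 49$, so the case closes with no further quantitative work. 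In short, the ingredients you name (Lemmas \ref{lem-torsion} and \ref{lem-unionOfFibers}) are the right ones, but the pivot you are missing is that the torsion hypothesis pins $C$ down to one curve; the Riemann--Hurwitz analysis of arbitrary curves mapping to $\PP^1\times\PP^1$, the involution-stable versus birational case split, and Lemma \ref{lem-scrolls} are all unnecessary here (the paper uses scrolls only in the $\PP^2\times\PP^1$ case).
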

\begin{proof}
Let $M_{H_i}$ denote the Lazarsfeld-Mukai bundle defined by 
$$0 \to M_{H_i} \to \OO \otimes H^0(\PP^1 \times \PP^1 \times \PP^1, H_i) \to H_i \to 0.$$  Any section in $H^0(\PP^1 \times \PP^1 \times \PP^1, \OO(a_1, a_2, a_3) \otimes H_i^{\vee})$, defines a natural map $$M_{H_i} \to M_{\OO(a_1,a_2,a_3)} = T_{\xX}^{\ver},$$ in the notation of \S \ref{sec-preliminaries}. Since every tri-graded polynomial vanishing at a point can be written as a polynomial combination of polynomials of degree 1 in each set of variables vanishing at the point, we see that the map  $\bigoplus_s (M_{H_1} \oplus M_{H_2} \oplus M_{H_3}) \to T_{\xX}^{\ver}$ is surjective for a sufficiently large $s$. Hence, the normal bundle $N_{\yY/\xX}$ admits a generically surjective map from $M_{H_i}$ for some $i$. 

By permuting the indices if necessary, assume that $i=1$. Let $f: C \rightarrow X$ be the normalization of a curve on $X$ with class $\OO_X(c_1, c_2, c_3)$. Then, by Proposition \ref{cor-degImBundle},  the degree of $N_{f/X}$ is at least $-H_1 \cdot C$. Thus, we have
\[ 2g-2  -K_X \cdot C \geq \deg N_{f/X} \geq - H_1 \cdot C .\]
Hence, 
\[ 2g-2 \geq (a_1-3)(a_2c_3+a_3c_2) + (a_2-2)(a_1c_3+a_3c_1) + (a_3-2)(a_1c_2+a_2c_1) .\]

If  $a_i \geq 3$ for $1 \leq i \leq 3$, let $a_{\max} = \max\{a_i\}$ and $a_{\min} = \min\{a_i\}$. We have that $$H \cdot C \leq 2 a_{\max} \sum_i c_i.$$ Hence,
\[ 2g-2 \geq a_1 c_3 + a_3c_1 + a_2c_3+a_3c_2 \geq a_{\min} \sum_i c_i \geq \frac{a_{\min}}{2a_{\max}} C \cdot H . \]
Thus, $X$ is algebraically hyperbolic.

Now suppose $a_2 = 2$ and $a_1, a_3 \geq 4$. Then we have
\[ 2g-2 \geq a_2c_3+a_3c_2 + a_1c_2+a_2c_1 \geq a_{\min} \sum_i c_i \geq \frac{a_{\min}}{2a_{\max}} C \cdot H  . \] 
A similar argument applies if $a_3 =2$ and $a_1, a_2 \geq 4$.

The last remaining case is when $a_1 = 2$ and $a_2, a_3 \geq 4$. If the map from $M_{H_2}$ or $M_{H_3}$ to $N_{f/X}$ has non-torsion image, then we can apply the arguments in the previous two cases.  Therefore, we may assume that both $M_{H_2}$ and $M_{H_3}$ have torsion image in $N_{f/X}$. This imposes strong restrictions on the geometry of $C$.  In particular, we now show that $C$ then has to be the ramification locus of the double cover $\pi_{2,3}: X \rightarrow \PP^1 \times \PP^1$. 

By Lemma \ref{lem-torsion},  $(T_{\yY/A})_p$ contains all possible images of $(M_{H_2})_p$ and $(M_{H_3})_p$ under an arbitrary multiplication map.  Hence, $(T_{\yY/A})_p$ contains the ideal sheaf of the fiber of the projection $\pi_{2,3}$ passing through $p$. By Lemma \ref{lem-unionOfFibers},  $\yY$ is a union of fibers of the projection $\pi_{2,3}$.  If $Y_t$ intersects a fiber of $\pi_{2,3}$ in two distinct points, then $\yY$ would be dense in $\xX$ because  $\yY$ is invariant under the group action $\PP GL_2 \times \PP GL_2 \times\PP GL_2 $ and the orbit of two distinct points under $\PP GL_2$ is dense.  Hence, $C=Y_t$ must consist of the ramification locus of the double cover $\pi_{2,3} : X_t \to \PP^1 \times \PP^1$. The ramification locus of $\pi_{2,3}$  is isomorphic to a curve of class $(2a_2, 2a_3)$ in $\PP^1 \times \PP^1$ and has genus $(2a_2-1)(2a_3-1) > 1$. Hence, $X$ is algebraically hyperbolic in this case as well.
\end{proof}

We remark that this completely characterizes algebraic hyperbolicity of such surfaces.

\subsection{A very general surface  in $\PP^2 \times \PP^1$}
The variety $\PP^2 \times \PP^1$ admits two natural projections $\pi_1$ and $\pi_2$ to $\PP^2$ and $\PP^1$, respectively. Let $H_1 = \pi_1^* \OO_{\PP^2}(1)$ and let $H_2 =  \pi_2^* \OO_{\PP^1}(1)$. Then
$$\Pic(\PP^2 \times \PP^1) = \ZZ H_1 \oplus \ZZ H_2 \quad \mbox{and} \quad K_{\PP^2 \times \PP^1} = -3H_1 - 2H_2.$$  We let $H=H_1 + H_2$ be the very ample class defining  the Segre embedding. Let $X$ be a very general surface with class $aH_1 + b H_2$, where $a, b$ are nonnegative integers. Then $K_X = \OO_X(a-3, b-2)$.  By the generalized Noether-Lefschetz Theorem \cite{ravindraSrinivas}, if $a \geq 3$ and $b\geq 2$, then every curve $C$ on $X$ has class $\OO_X(c, d)$ for some $c, d \geq 0$. The $H$-degree of $C$ is given by $$\deg_H(C)= ac + ad + bc.$$

\begin{lemma}
Let $X$ be a very general surface in $\PP^2 \times \PP^1$ of class $aH_1 + bH_2$. If $b=1$ or $a \leq 3$ or $(a,b) = (4,2)$, then $X$ is not algebraically hyperbolic.
\end{lemma}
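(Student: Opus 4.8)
The plan is to establish non-hyperbolicity in every case by producing an integral curve $C \subset X$ with geometric genus $g(C) \leq 1$ and $\deg_H(C) > 0$: such a curve satisfies $2g(C) - 2 \leq 0 < \epsilon\, \deg_H(C)$ for all $\epsilon > 0$, so its presence contradicts algebraic hyperbolicity. I would run the argument through the two projections $\pi_1 : X \to \PP^2$ and $\pi_2 : X \to \PP^1$. For $b = 1$, a defining section of $\OO(aH_1 + H_2)$ has the form $sF + tG$ with $F, G$ of degree $a$ on $\PP^2$ and $[s:t]$ coordinates on $\PP^1$; over a general $x \in \PP^2$ the equation has the unique solution $[s:t] = [G(x): -F(x)]$, so $\pi_1$ is birational and $X$ is rational, hence not algebraically hyperbolic. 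For $a \leq 3$, I would instead use the fibers of $\pi_2$: the fiber over $p \in \PP^1$ is the plane curve $X \cap (\PP^2 \times \{p\})$ of degree $a$, whose general member is smooth of genus $\binom{a-1}{2} \leq 1$ --- a line or conic for $a \leq 2$, an elliptic plane cubic for $a = 3$ --- giving a one-parameter family of curves of genus at most one.

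The substantive case is $(a,b) = (4,2)$. A section now has the shape $As^2 + Bst + Ct^2$ with $A, B, C$ quartics on $\PP^2$, so $\pi_1 : X \to \PP^2$ is a double cover branched along the octic $\Delta = \{B^2 - 4AC = 0\}$. The first step is to verify that $\Delta$ is smooth for the very general $X$. Writing $w_0 = -B/(2A)$ for the double root of the fiber quadratic at a point $x \in \Delta$, a direct computation gives $(B^2 - 4AC)_{x_i} = -4A\,(A_{x_i}w_0^2 + B_{x_i}w_0 + C_{x_i})$ for each $i$; the vanishing of all these expressions is exactly the condition that $X$ be singular at the ramification point over $x$. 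Hence smoothness of the very general $X$, guaranteed by Bertini, forces $\Delta$ to be smooth.

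With $\Delta$ a smooth plane octic, the key geometric input is that it carries bitangent lines: by the Plücker formula a smooth plane curve of degree $d$ has $\tfrac{1}{2}d(d-2)(d^2-9)$ bitangents, which is positive for $d = 8$. Fix a bitangent $\ell$, tangent to $\Delta$ at two points $p, q$ and transverse to it at four further points. Then $\pi_1^{-1}(\ell)$ is the bidegree-$(4,2)$ curve $\{A|_\ell\, s^2 + B|_\ell\, st + C|_\ell\, t^2 = 0\}$ in $\ell \times \PP^1$, of arithmetic genus $3$; because the discriminant $(B^2 - 4AC)|_\ell$ is not a perfect square (it has four simple zeros), this curve is irreducible. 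Its double-cover structure over $\ell \cong \PP^1$ ramifies only over the four transverse points and acquires a node over each of $p$ and $q$, so Riemann--Hurwitz yields geometric genus $1$. These genus-one curves are exactly the special members of the two-dimensional linear system $|\pi_1^*\OO_{\PP^2}(1)|$, whose general member is a smooth genus-$3$ hyperelliptic curve --- the same genus-dropping degeneration phenomenon exploited in the preceding lemma.

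I expect the main obstacle to lie entirely in the $(4,2)$ case: proving that the branch octic is smooth for the very general surface, checking that a bitangent line produces an irreducible rather than a split double cover, and carrying out the local analysis at the two tangency points to confirm that $\pi_1^{-1}(\ell)$ acquires exactly two nodes and therefore drops to geometric genus one. Once the appropriate projection is singled out, the cases $b = 1$ and $a \leq 3$ are immediate.
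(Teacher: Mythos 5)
Your proposal is correct and follows essentially the same route as the paper: birationality of $\pi_1$ for $b=1$, the family of plane curves of genus $\le 1$ from $\pi_2$ for $a \le 3$, and for $(a,b)=(4,2)$ the realization of $X$ as a double cover of $\PP^2$ branched over a smooth octic, with the preimage of a bitangent line giving a two-nodal curve of geometric genus $1$. The only difference is that you fill in details the paper leaves implicit (smoothness of the discriminant octic via the singularity correspondence, existence of bitangents via Pl\"ucker, irreducibility and the node count for the preimage), which only strengthens the argument.
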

\begin{proof}
If $b=1$, then $X$ is birational to $\PP^2$, hence rational. If $a \leq 3$, then the projection onto $\PP^1$ gives a covering of $X$ by plane curves of degree $a \leq 3$. Since these curves are either rational (if $a \leq 2$) or elliptic curves (if $a=3)$, $X$ cannot be algebraically hyperbolic.

Now suppose that $a=4, b=2$. Then we can view $X$ as the vanishing of $gs^2+hst+kt^2$ where $g,h$ and $k$ are degree $4$ polynomials in the variables $x,y,z$ on $\PP^2$. Such $X$ can be viewed as a double cover of $\PP^2$ branched over the octic curve $B$ defined by $h^2-4gk=0$, which is smooth for a general choice of $g,h$ and $k$. The preimage of a bitangent line to $B$ is a geometric genus $1$ curve in $X$ with two nodes. It follows that such surfaces are not algebraically hyperbolic.
\end{proof}

\begin{theorem} \label{thm-p2p1}
Let $X$ be a very general surface in $\PP^2 \times \PP^1$ of class $aH_1 + b H_2$. Then $X$ is algebraically hyperbolic if and only if $$b\geq 3, a \geq 4 \quad \mbox{ or} \quad  b=2, a \geq 5.$$
\end{theorem}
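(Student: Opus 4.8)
The plan is to prove the two directions separately. The ``only if'' direction is immediate: among positive integers the complement of the region $\{b\geq 3,\ a\geq 4\}\cup\{b=2,\ a\geq 5\}$ is exactly $\{b=1\}\cup\{a\leq 3\}\cup\{(a,b)=(4,2)\}$, and the preceding lemma shows $X$ is not algebraically hyperbolic in each of these cases. For sufficiency I would apply Corollary \ref{cor-main} with the section-dominating collection $\{H_1,H_2\}$ for $\eE=aH_1+bH_2$ (justified in the first Example above). Since $A=\PP^2\times\PP^1$ is homogeneous we have $A_0=A$, so every integral curve meets $A_0$. Writing $[C]=\OO_X(c,d)$ with $c,d\geq 0$, the intersection form gives $H_1\cdot C=bc+ad$, $H_2\cdot C=ac$, and $K_X\cdot C=(a-3)(H_1\cdot C)+(b-2)(H_2\cdot C)$. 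The argument inside the proof of Corollary \ref{cor-main} produces an index $i\in\{1,2\}$ for which $M_{H_i}|_C\to\kK|_C$ is generically surjective, and hence $2g-2\geq K_X\cdot C-H_i\cdot C$. The two resulting bounds factor cleanly:
\begin{align*}
2g-2 &\geq (a-4)(H_1\cdot C)+(b-2)(H_2\cdot C) \quad (i=1), \\
2g-2 &\geq (a-3)(H_1\cdot C)+(b-3)(H_2\cdot C) \quad (i=2).
\end{align*}

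Because $b\geq 2$ forces $H_2\cdot C\leq \tfrac{a}{2}(H_1\cdot C)$, one has $\deg_H C\leq \tfrac{a+2}{2}(H_1\cdot C)$, so any estimate of the form $2g-2\geq\kappa\,(H_1\cdot C)$ with $\kappa>0$ yields $2g-2\geq\tfrac{2\kappa}{a+2}\deg_H C$. Since both correction terms $(b-2)(H_2\cdot C)$ and $(b-3)(H_2\cdot C)$ are nonnegative in the relevant ranges, the $i=1$ bound proves algebraic hyperbolicity whenever $a\geq 5$ (take $\kappa=a-4$), and the $i=2$ bound proves it whenever $a\geq 4$ and $b\geq 3$ (take $\kappa=a-3$). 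In particular the main region $a\geq 5,\ b\geq 3$ is settled outright, since then \emph{both} bounds work and it does not matter which index the Corollary hands us. This reduces the problem to the two boundary regions in which exactly one bound degenerates.

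The remaining work is the torsion analysis in these two regions. When $a=4$ and $b\geq 3$ the $i=2$ bound still works, so I only need the curves whose sole available index is $i=1$, i.e.\ for which $M_{H_2}|_C\to\kK|_C$ has torsion image. Applying Lemma \ref{lem-torsion} to $L=H_2$ shows that $(T_{\yY/A})$ then contains $H^0(\II_T\otimes\eE)$ for $T$ the $\PP^2$-fiber of $\pi_2$ through the relevant point, so Lemma \ref{lem-unionOfFibers} forces $C$ to lie in a fiber of $\pi_2$, i.e.\ $C$ is a plane curve of degree $a=4$; for very general $X$ these fibers are integral with at worst nodal singularities, hence of geometric genus at least $2$, and since $\deg_H C=4$ the bound $2g-2\geq 2\geq\tfrac12\deg_H C$ holds. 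Symmetrically, when $b=2$ and $a\geq 5$ the $i=1$ bound works, and I treat the curves whose sole index is $i=2$, i.e.\ $M_{H_1}|_C\to\kK|_C$ torsion. Here $\pi_1|_X\colon X\to\PP^2$ is a double cover; Lemmas \ref{lem-torsion} and \ref{lem-unionOfFibers} (with $L=H_1$ and $T$ the $\PP^1$-fiber of $\pi_1$) force $C$ into fibers of $\pi_1$, and exactly as in the $\PP^1\times\PP^1\times\PP^1$ case, if $C$ met a general two-point fiber in both points then $\yY$ would be dense in $\xX$ by $\PP GL_3\times \PP GL_2$-invariance, a contradiction; hence $C$ is the ramification curve $R$. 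As $R$ maps isomorphically to the branch curve, a smooth plane curve of degree $2a$, it has genus $(2a-1)(a-1)>1$, giving $2g-2\geq\tfrac{4a-6}{a+2}\deg_H C>0$.

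Taking $\epsilon$ to be the minimum of the finitely many positive ratios produced above then proves algebraic hyperbolicity in every case. The main obstacle is precisely the analysis of these two boundary regions, where the Corollary's normal-bundle bound degenerates and one must transfer information from the failing bundle to the geometry of $X$. The $b=2$ case is the more delicate: the fibers of $\pi_1$ are zero-dimensional, so Lemma \ref{lem-unionOfFibers} does not by itself confine $C$ to a curve, and one must invoke the homogeneity of $\PP^2\times\PP^1$ to pin $C$ down to the ramification divisor of the double cover. The $a=4$ case instead rests on the very-general fact that no fiber of $\pi_2$ degenerates to geometric genus $\leq 1$, which is what prevents a covering family of low-genus curves.
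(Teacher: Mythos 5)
Your global structure is sound: the reduction via Corollary \ref{cor-main} to the two boundary regions is exactly right, the intersection-number bookkeeping is correct, and your treatment of the region $b=2$, $a\geq 5$ (torsion image of $M_{H_1}$, then Lemmas \ref{lem-torsion} and \ref{lem-unionOfFibers} plus the $\PP GL_3\times\PP GL_2$ orbit-density argument forcing $C$ into the ramification curve of the double cover $\pi_1|_X$) coincides with the paper's Case 4. The gap is in the region $a=4$, $b\geq 3$. There you assert that Lemmas \ref{lem-torsion} and \ref{lem-unionOfFibers} ``force $C$ to lie in a fiber of $\pi_2$.'' They do not. With $L=H_2$, the subvariety $T$ is the $\PP^2$-fiber of $\pi_2$ through $p$, and the conclusion of Lemma \ref{lem-unionOfFibers} is only that the incidence locus $W=h(\yY)\cap Z$ is a union of fibers of $\beta\colon Z\to H^0(\eE|_T)$, i.e.\ that whether the curve in $X_f$ passes through $p$ depends only on the plane quartic $f|_T$. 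That is far weaker than $C\subset T$. In every instance where the paper extracts a geometric conclusion from this lemma, $T$ is a $\PP^1$ meeting $X$ in two points, and the stabilizer of $(T,p)$ acts on $H^0(\eE|_T\otimes\II_p)$ with a dense orbit, so the unique invariant divisor is the ramification condition; that orbit-density step (which you do carry out in your $b=2$ region) is what pins the curve down. For $T\cong\PP^2$ there is no such step available: $H^0(\OO_{\PP^2}(4)\otimes\II_p)$ is $14$-dimensional while the stabilizer of $p$ in $\PP GL_3$ has dimension $6$, so there are many invariant divisors. Concretely, the locus of flexes of the fibers of $\pi_2|_X$, or the locus of points where the fiber quartic meets one of its bitangent lines, is a $G$-invariantly defined \emph{horizontal} curve whose incidence locus is a union of fibers of $\beta$; your argument cannot distinguish such curves from actual fibers, and for them you have established no genus bound. (Your subsidiary claim that very general $X$ has only integral, at-worst-nodal $\pi_2$-fibers is also asserted rather than proved, but it is the inference above that actually breaks.)

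This is precisely why the paper treats $a=4$, $b\geq 3$ by a different mechanism: whenever $M_{H_1}$ maps generically surjectively to $N_{f/X}$ (which includes every curve for which your $i=2$ bound is unavailable), it applies Lemma \ref{lem-scrolls} to the rank-one quotient of $f^*M_{H_1}$, producing a scroll of class $\alpha H_1+\beta H_2$ containing $C$ with $\beta=q+H_1\cdot C\geq d$, hence the improved bound $\deg N_{f/X}\geq d-H_1\cdot C$ and $2g-2\geq d+(b-2)ac\geq c+d$. To repair your proof, either substitute this scroll argument in the $a=4$ region, or else carry out the much harder task of classifying all stabilizer-invariant divisors in the space of quartics through $p$ and bounding the genus of each curve they cut out.
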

\begin{proof}
We use the ample class $H= H_1 + H_2$. Let $f: C \to X$ be a curve on $X$ with $f(C)$ in the class $\OO_X(c,d)$. Then $\deg_H(C) = (a+b)c+a d.$  Since $a(c+d) \leq (a+b)c + ad \leq (a+b)(c+d),$ we have that  $$c+d \geq \frac{1}{a+b} \deg_H(C).$$ Hence, to prove algebraic hyperbolicity, it suffices to bound $2g(C) -2$ below by a multiple of $c+d$. As in the proof of Theorem \ref{thm-p1p1p1}, $H_1$ and $H_2$ form a section dominating collection of line bundles for $aH_1 + bH_2$. Hence, by Proposition \ref{prop-smallBundle}, $M_{H_1}$ or $M_{H_2}$ maps generically surjectively to $N_{f/X}$.  We discuss the possibilities separately. 

\textbf{Case 1:} $M_{H_1}$ maps generically surjectively to $N_{f/X}$ and $a \geq 5$, $b \geq 2$. Then we have
\[ 2g-2 -K_X \cdot C \geq -H_1 \cdot C ,\]
so that
\[ 2g-2 \geq ((a-4)H_1+(b-2)H_2) \cdot C = (a-4)(bc+ad)+(b-2)ac . \]
Since $a \geq 5, b \geq 2$, we are done.

\textbf{Case 2:} $M_{H_1}$ maps generically surjectively to $N_{f/X}$, $a=4$ and $b \geq 3$. Observe that in Case 2, $f^*M_{H_1}$ has a rank $1$ quotient bundle $Q$ of degree $q$, the degree of the torsion-free part of $N_{f/X}$. By Lemma \ref{lem-scrolls}, we obtain a scroll $\PP(Q') \to C$ of $H_1$ degree $q+H_1 \cdot C$. Let $S$ be the image of the scroll in $\PP^2 \times \PP^1$, and let $S = \alpha H_1 + \beta H_2$. From the description of the scroll $\PP(Q')$ in the proof of Lemma \ref{lem-scrolls}, we see that the fibers of $\PP(Q') \to C$ over a point $p$ map to lines in $\PP^2 \times \{ \pi_2(f(p)) \}$. Thus, $S \cdot H_2$ consists of one line for every point of $C \cdot H_2$, so $\alpha = H_2 \cdot C = ac$. Since $H_1^2 \cdot S = q+H_1 \cdot C$, we know that $\beta = q + H_1 \cdot C$. Because $C$ lies in $S$, it follows that $\beta \geq d$. Thus,
\[ \deg N_{f/X} \geq q = \beta - C \cdot H_1 \geq d - C \cdot H_1 . \]
It follows that
\[ 2g-2 - K_X \cdot C \geq d - C \cdot H_1 \]
or
\[ 2g-2 \geq d + (a-4)H_1 \cdot C + (b-2)H_2 \cdot C = d + (a-4)(bc+ad)+(b-2)ac. \]
If $b \geq 3$, then we see that this last is at least $c+d$, and we are done.

\textbf{Case 3:} $M_{H_2}$ maps generically surjectively to $N_{f/X}$ and $b \geq 3$, $a \geq 4$. Then we have
\[ 2g-2 -K_X \cdot C \geq - H_2 \cdot C,  \]
so that
\[ 2g-2 \geq ((a-3)H_1 + (b-3)H_2) \cdot C = (a-3)(bc+ad) + (b-3)(ac). \]
If $b \geq 3$, $a \geq 4$, then we  see that $$2g-2 \geq bc+ad \geq c+d.$$ 

\textbf{Case 4:} $M_{H_1}$ has torsion image in $N_{f/X}$ and $b=2$, $a \geq 5$. By Lemma \ref{lem-torsion}, we see that for a general $p \in \yY$, $(T_{\yY})_p$ contains the ideal sheaf of the fiber of the projection $\pi_1$ of $X$ onto the $\PP^2$ factor. By Lemma \ref{lem-unionOfFibers}, $\yY$ is a union of fibers of the projection map $\pi_1$. If a generic fiber of $\pi_1$ on $\yY$ were two distinct points, then $\yY$ would be dense in $\xX$ because $\yY$ is invariant under the $\PP GL_3 \times \PP GL_2$ action and the orbit of two distinct points under $\PP GL_2$ is dense. Hence, $f(C)$ is contained in the ramification locus of $\pi_1$. The ramification locus is isomorphic to a curve in $\PP^2$ of degree $2a$, which has genus larger than $1$ for the range of degrees we consider.

Thus, $X$ is always algebraically hyperbolic.

\end{proof}

This completely determines when $X$ is algebraically hyperbolic.

\subsection{A very general surface in $\bF_e \times \PP^1$} This example generalizes the previous two examples. Let $e \geq 0$ be a nonnegative integer. Let $\bF_e$ denote the Hirzebruch surface $\PP(\OO_{\PP^1} \oplus \OO_{\PP^1}(e))$. Since we have already discussed $\PP^1 \times \PP^1 \times \PP^1$, we assume $e \geq 1$. The complement of $E \times \PP^1 \cong \PP^1 \times \PP^1$ in $\bF_e \times \PP^1$ is homogeneous. The surface $\bF_e$ admits a natural projection to $\PP^1$. Let $F$ denote the class of a fiber and let $E$ denote the class of a section with self-intersection $-e$. Then $\Pic(\bF_e) = \ZZ E \oplus \ZZ F$. Let $\pi_2$ be the natural projection of $\bF_e \times \PP^1$ to $\PP^1$ and let $H = \pi_2^* \OO_{\PP^1}(1)$. We then have 
$$\Pic (\bF_e \times \PP^1) = \ZZ E \oplus \ZZ F \oplus \ZZ H \quad \mbox{and} \quad K_{\bF_e \times \PP^1} = -2E - (e+2) F - 2H.$$
Let $X$ be a very general surface of class $a_1E + a_2F + a_3H$. If $a_1, a_3 \geq 2, a_2 \geq e+2$, then by the generalized Noether-Lefschetz Theorem, $\Pic(X) \cong \Pic(\bF_e \times \PP^1)$ and every curve on $X$ has the class $c_1 E + c_2 F + c_3 H$. We set $L$ to be the very ample class $E + (e+1) F + H$. We then have 
$$\deg_L(C)= a_1c_2 + a_2 c_1 + a_2 c_3 + a_3 c_2 + a_1 c_3 + a_3 c_1 - e a_1 c_1.$$ As usual, let $a_{\min}$ and $a_{\max}$ be the minimum and the maximum of the $a_i$. 
Observe that $$\deg_L(C) \leq 2 a_{\max} \sum_{i=1}^3 c_i.$$ By adjunction,
$$K_X = (a_1-2)E + (a_2 - e-2)F + (a_3 -2)H.$$ 

\begin{lemma}
Suppose $X$ is as above. Consider the three numbers $a_1, a_3$ and $a_2' = a_2 - ea_1$. Then $X$ is not algebraically hyperbolic in any of the following situations 
\begin{itemize}
\item if any of $a_1, a_2'$ or $a_3$ is less than $2$.
\item if $a_3 = 2$ and one of $a_1$ and $a_2'$ is equal to $2$.
\item if $(a_1, a_3)$ is $(2,3)$ or $(3,2)$.
\item if $e=1$, $a_1 = 2$, $a_2' = 2$ and $a_3 \geq 1$.
\end{itemize}
Outside of these situations, the pullback of $E$ from $\bF_e$ does not contain any rational or genus 1 curve.
\end{lemma}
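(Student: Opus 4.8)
The plan is to prove non-algebraic-hyperbolicity in each listed case by exhibiting on $X$ a single integral curve of geometric genus at most $1$ and positive $L$-degree; such a curve makes $2g-2 \geq \epsilon \deg_L C$ fail for every $\epsilon > 0$, so one curve suffices. I would extract these curves from three natural families and compute genera by adjunction. Composing $X \hookrightarrow \bF_e \times \PP^1 \to \bF_e \to \PP^1$ with the ruling, the fibers are $X \cap (F_q \times \PP^1)$; restricting $E,F,H$ to $F_q \times \PP^1 \cong \PP^1 \times \PP^1$ (where $E \mapsto \{pt\}\times \PP^1$, $F \mapsto 0$, $H \mapsto F_q \times \{pt\}$) shows these have bidegree $(a_3,a_1)$ and arithmetic genus $(a_3-1)(a_1-1)$. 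Projecting $X$ to the second factor, the fibers are curves of class $a_1E + a_2F$ on $\bF_e$, of genus $0$ when $a_1 = 1$ and $a_2' + e - 1$ when $a_1 = 2$. Finally, the boundary $B := X \cap (E \times \PP^1)$: restricting $E,F,H$ to $E \times \PP^1 \cong \PP^1 \times \PP^1$ (using $E|_E = -e$, $E\cdot F = 1$, so $E \mapsto -e(\{pt\}\times\PP^1)$, $F \mapsto \{pt\}\times\PP^1$, $H \mapsto E \times \{pt\}$) gives $B$ the bidegree $(a_3, a_2')$ and genus $(a_3-1)(a_2'-1)$.

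These three computations settle most situations immediately. If $a_1 \leq 1$, the second projection has rational fibers; if $a_2' \leq 1$ or $a_3 \leq 1$, the boundary $B$ is rational; if $a_3 = 2$ and $a_1 = 2$, the base fibration has fibers of bidegree $(2,2)$, hence elliptic; and if $a_3 = 2$ and $a_2' = 2$, the boundary $B$ has bidegree $(2,2)$, hence elliptic. In each of these, the general member already has genus $\leq 1$, furnishing the desired curve with no further work.

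The delicate situations are the genus-$2$ thresholds, which I expect to be the main obstacle. For $(a_1,a_3) \in \{(2,3),(3,2)\}$ the base fibration has fibers of bidegree $(a_3,a_1)$, i.e. $(3,2)$ or $(2,3)$, of arithmetic genus exactly $2$; and for $e = 1$, $a_1 = 2$, $a_2' = 2$ (so $a_2 = 4$), the second projection has fibers of class $2E + 4F$ on $\bF_1$, which by adjunction also has genus exactly $2$ --- here the identity $a_2' + e - 1 = 2$ is precisely what singles out $e = 1$, since for $e \geq 2$ the genus is at least $3$. In each case I would argue as in the proof of Theorem \ref{thm-p1p1p1}: for a very general $X$ the relevant fibration over $\PP^1$ is non-isotrivial, so its one-parameter family of genus-$2$ fibers must acquire singular members, and a singular curve of arithmetic genus $2$ has an integral component whose normalization has geometric genus at most $1$, producing the required rational or elliptic curve. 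The hard part is justifying the existence of singular fibers --- combining non-isotriviality for very general $X$ with the standard degeneration of non-isotrivial fibrations over $\PP^1$ --- and correctly identifying which fibration reaches arithmetic genus exactly $2$, which is exactly the bookkeeping that forces $e = 1$ in the last case.

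For the closing assertion, note that any curve on $X$ contained in the pullback $E \times \PP^1$ of $E$ lies in $B = X \cap (E \times \PP^1)$, of bidegree $(a_3, a_2')$. Lying outside every listed situation forces $a_1, a_2', a_3 \geq 2$ with $a_2'$ and $a_3$ not both equal to $2$ (if $a_3 = 2$, the second situation forces $a_2' \geq 3$), so $(a_3-1)(a_2'-1) \geq 2$; in particular the hypotheses of the generalized Noether--Lefschetz range hold, so $X$ is smooth and irreducible. For very general $X$ the restriction map $H^0(\bF_e \times \PP^1, \OO(a_1E + a_2F + a_3H)) \to H^0(E \times \PP^1, \OO(a_3, a_2'))$ is surjective, as the kernel $\OO((a_1-1)E + a_2 F + a_3 H)$ has vanishing $H^1$ in this range (a routine K\"unneth computation, since $a_1 - 1 \geq 1$, $a_3 \geq 0$, and $a_2 > e(a_1-1)$). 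Hence $B$ is a general, and therefore smooth and irreducible, member of $|\OO(a_3, a_2')|$ of genus $(a_3-1)(a_2'-1) \geq 2$, so it is the unique curve in $(E \times \PP^1) \cap X$ and it has no rational or genus-$1$ component, as claimed.
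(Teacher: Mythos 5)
Your proposal is correct and follows essentially the same route as the paper: both extract the low-genus curves from the same three sources (the curves of bidegree $(a_1,a_3)$ cut by the pullbacks of $F$, the boundary curve of bidegree $(a_2',a_3)$ cut by the pullback of $E$, and the fibers of class $a_1E+a_2F$ over the second $\PP^1$), with identical adjunction computations, and both handle the threshold cases $(a_1,a_3)\in\{(2,3),(3,2)\}$ and $(e,a_1,a_2')=(1,2,2)$ by forcing a one-parameter family of arithmetic-genus-$2$ curves to acquire singular members. The one genuine divergence is how that degeneration is justified: the paper asserts that the discriminant locus in the relevant linear system is ample, so the family must meet it, whereas you invoke non-isotriviality for very general $X$ together with the nonexistence of smooth non-isotrivial genus-$2$ fibrations over $\PP^1$ (Parshin--Arakelov type rigidity). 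Your version uses heavier machinery but is arguably more robust when $e\neq 0$: the pullbacks of $F$ sit in varying surfaces $F_q\times\PP^1$, so the paper's family really lives in a twisted projective bundle over $\PP^1$, where fiberwise ampleness of the discriminant does not by itself force a section to meet it; your moduli-theoretic argument sidesteps this, though you still owe the non-isotriviality claim (one way to discharge it: a smooth isotrivial genus-$2$ family over the simply connected base $\PP^1$ is trivial, since the fiber has finite automorphism group, forcing $X$ to be a product, which contradicts its divisor class and Noether--Lefschetz). Finally, your treatment of the closing assertion---surjectivity of restriction to $E\times\PP^1$ via the $H^1$-vanishing of $\OO((a_1-1)E+a_2F+a_3H)$, then Bertini, so the boundary curve is smooth and irreducible of genus $(a_2'-1)(a_3-1)\geq 2$---is more explicit than the paper's, which leaves that step implicit; both that computation and your bookkeeping showing the genus is at least $2$ outside the excluded list are correct.
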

\begin{proof}
Consider the following two surfaces:
\begin{itemize}
\item The pullback of $E$ from $\bF_e$.
\item The pullback of $F$ from $\bF_e$.
\end{itemize}
Each of these surfaces is isomorphic to a $\PP^1 \times \PP^1$, and intersecting each surface with $X$, we obtain curves of classes $(a_2',a_3)$ and $(a_1,a_3)$ on the $\PP^1 \times \PP^1$. Using the fact that the genus of a curve on $\PP^1 \times \PP^1$ of class $(m,n)$ is $(m-1)(n-1)$, we obtain the result in the first two cases.

Now consider the third case. Consider the one-parameter family of the pullbacks of $F$ from $\bF_e$. This gives a 1-parameter family of genus 2 curves in a $\PP^1 \times \PP^1$. Since the discriminant locus in $\PP^1 \times \PP^1$ is ample, the family must contain singular elements having geometric genus at most 1. Thus, the surface is not algebraically hyperbolic in this case.

Finally, consider the last case. Intersecting $X$ with the $1$-parameter family of $\bF_1$'s corresponding to the fibers of the projection onto the $\PP^1$ factor, we get a $1$-parameter family of curves of class $2E+4F$ on $\bF_1$. Viewing $\bF_1$ as the blowup of $\PP^2$ at a point, we obtain a $1$-parameter family of quartic plane curves singular at the point of the blowup. Such curves generically have genus 2. This family of curves must meet the discriminant locus, so it must parameterize curves with extra singularities, which have genus at most 1. The result follows.
\end{proof}

\begin{theorem}
Let $e >0$ and let $X$ be a very general surface of class $a_1 E + a_2 F + a_3 H$ in $\bF_e \times \PP^1$. Let $a_2' = a_2 - ea_1$. Then $X$ is algebraically hyperbolic in the following cases:
\begin{itemize}
\item if $a_2' \geq 2$ and $a_1, a_3 \geq 3$
\item if $a_3 = 2, a_1 \geq 4,$ and $a_2' \geq 3$
\item if $a_1 = 2, a_3 \geq 4$ and $a_2' \geq 2 + \delta_{e,1}$ where $\delta_{e,1}$ is $0$ if $e \neq 1$ and $1$ if $e = 1$
\end{itemize}
\end{theorem}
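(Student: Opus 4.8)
The plan is to follow the template of Theorems \ref{thm-p1p1p1} and \ref{thm-p2p1}, after passing to the basis of the nef cone adapted to $\bF_e$. Here the homogeneous open set is $A_0 = (\bF_e \setminus E) \times \PP^1$, so any integral curve $C \subset X$ either lies in $(A \setminus A_0) \cap X = (E \times \PP^1) \cap X$ or meets $A_0$. The first possibility is harmless: $(E \times \PP^1) \cap X$ is the single irreducible curve of class $(a_2', a_3)$ on $E \times \PP^1 \cong \PP^1 \times \PP^1$, whose genus $(a_2'-1)(a_3-1)$ exceeds one in each of the three cases by the preceding lemma; this is exactly the requirement on $(A \setminus A_0) \cap X$ in Corollary \ref{cor-main}, and it is the reason $a_2' \geq 3$ is forced when $a_3 = 2$. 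For the remaining curves I set $E' = E + eF$, so that $E'$ and $F$ generate the nef cone of $\bF_e$, the class of $X$ is $a_1 E' + a_2' F + a_3 H$, and adjunction reads
\[ K_X = (a_1 - 2) E' + (a_2' + e - 2) F + (a_3 - 2) H ; \]
the extra $+e$ in the coefficient of $F$ is the source of the $\delta_{e,1}$ correction. I would check that $\{E', F, H\}$ is section-dominating for $\OO_A(X)$ over $A_0$ — each twist $\OO_A(X) \otimes (E')^\vee$, $\OO_A(X) \otimes F^\vee$, $\OO_A(X) \otimes H^\vee$ is globally generated in the stated ranges, and away from the negative section (which $E'$ contracts) the sections of $E'$, $F$, $H$ vanishing at a point generate its ideal, just as in Theorem \ref{thm-p1p1p1}. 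Corollary \ref{cor-main} then gives $2g - 2 \geq (K_X - L) \cdot C$ for some $L \in \{E', F, H\}$.

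The bulk of the work is a positivity computation. Writing $C = c_1 E' + c_2' F + c_3 H$ with $c_i \geq 0$ and using that the only nonzero triple products on $A$ are $E'^2 H = e$ and $E' F H = 1$, I would expand $(K_X - L) \cdot C$ for each choice of $L$ and bound it below by a fixed positive multiple of $\deg_L C$. The feature that distinguishes $\bF_e \times \PP^1$ from $\PP^2 \times \PP^1$, and makes the scroll argument of Lemma \ref{lem-scrolls} unnecessary here, is that both $F$ and $H$ pair nontrivially with $E'$, so a coefficient of $K_X - L$ that vanishes in one direction is compensated by the remaining terms. Tracking which coordinate of $C$ becomes critical when a coefficient of $K_X - L$ degenerates reproduces the thresholds exactly: the coefficient of $c_2'$ forces $a_1 \geq 4$ when $a_3 = 2$ and $a_3 \geq 4$ when $a_1 = 2$, while capturing $c_3$ in the subcase $L = F$ forces $a_2' + e - 3 \geq 1$, which together with the baseline $a_2' \geq 2$ gives $a_2' \geq 2 + \delta_{e,1}$. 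In case (i) all three choices of $L$ yield a strictly positive bound, so the argument terminates.

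The main obstacle is the subcase in which the only generically surjective map comes from the \emph{bad} bundle — $L = H$ when $a_3 = 2$ (case (ii)), or $L = E'$ when $a_1 = 2$ (case (iii)) — since then $K_X - L$ carries a negative coefficient and the direct bound is useless. In this situation the other two bundles have torsion image, and I would invoke Lemma \ref{lem-torsion} for both of them to conclude that $T_{\yY/A}$ contains $H^0(\II_T \otimes \OO_A(X))$, where $T$ is the fiber through the marked point of the relevant projection: the fiber $\{q\} \times \PP^1$ of $\pi_1 \colon A \to \bF_e$ when $a_3 = 2$, and the fiber $F_b \times \{y\}$ of the projection $A \to \PP^1 \times \PP^1$ when $a_1 = 2$ (one uses that on $A_0$ the sections of the two good bundles vanishing at the point cut out $T$). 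Lemma \ref{lem-unionOfFibers} then forces $\yY$ to be a union of these fibers. Since $X$ meets each such fiber in exactly two points and the relevant group acts $2$-transitively on them — $\PP GL_2$ on the $\PP^1$ factor when $a_3 = 2$, and the affine group acting on the fibers of $\bF_e \setminus E \to \PP^1$ when $a_1 = 2$ — a curve meeting a general fiber twice would make $\yY$ dense in $\xX$, a contradiction. Hence $C$ is the ramification curve of the corresponding double cover, and a class-and-genus computation shows it has geometric genus greater than one in the stated ranges.

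Finally, as in the previous two theorems, the curves not covered by the direct bound — the ramification curves and $(E \times \PP^1) \cap X$ — form a finite set of curves of fixed degree and genus at least two, so shrinking $\epsilon$ accommodates them and $X$ is algebraically hyperbolic. I expect the section-dominating verification over $A_0$ and the case (ii)/(iii) fiber analysis to be the only genuinely delicate steps; the positivity estimates, while lengthy, are routine once the intersection numbers above are in hand.
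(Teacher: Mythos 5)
Your proposal follows the paper's proof essentially step for step: the same section-dominating collection $\{E+eF,\,F,\,H\}$ (your $E'$ is the paper's $E+eF$), the same case division according to which Lazarsfeld--Mukai bundle surjects onto $N_{f/X}$ with the same thresholds (including correctly locating the $\delta_{e,1}$ correction in the coefficient $a_2'+e-3$ of the $L=F$ estimate), and the same treatment of the bad subcases via Lemma \ref{lem-torsion} and Lemma \ref{lem-unionOfFibers}, two-point transitivity of the group on the relevant fibers, and a genus computation for the ramification curve of the double cover, together with the genus check on $(E\times\PP^1)\cap X$. The only differences are cosmetic: you work in the nef basis $E',F,H$ and defer the routine intersection-number estimates that the paper writes out explicitly.
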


\begin{proof}
We first consider the case when $a_2-ea_1 \geq 2$, $a_1, a_3 \geq 3$. The collection of line bundles $E+eF, F$ and $H$ are a section dominating collection of line bundles for $a_1 E + a_2 F + a_3 H$. Hence, by Proposition \ref{prop-smallBundle}, $M_{E+eF}$, $M_F$ or $M_{H}$ maps generically surjectively to $N_{f/X}$. We discuss the three possibilities separately. 

\textbf{Case 1:} $M_{E+eF}$ maps generically surjectively to $N_{f/X}$. Then 
$$2g-2 - K_X \cdot C \geq - (E+eF) \cdot C,$$ so that 
$$2g-2 \geq (a_2 - ea_1 +e-2)(a_1c_3+a_3 c_1) + (a_1-3)(a_2c_3 + a_3c_2)+ (a_3 -2)(a_1c_2 + a_2 c_1 -ea_1c_1)$$ 
$$\geq (2+3e)c_1 +3c_2 +  3e c_3  \geq \frac{3}{2a_{\max}} \deg_L(C).$$

\textbf{Case 2:} $M_{F}$ maps generically surjectively to $N_{f/X}$. Then 
$$2g-2 - K_X \cdot C \geq - F \cdot C,$$ so that 
$$2g-2 \geq (a_2 - ea_1 +e-3)(a_1c_3+a_3 c_1) + (a_1-2)(a_2c_3 + a_3c_2)+ (a_3 -2)(a_1c_2 + a_2 c_1-ea_1c_1)$$
$$\geq 2 c_1 + (a_1 + a_3) c_2 + a_2 c_3 \geq \frac{1}{a_{\max}} \deg_L(C).$$

\textbf{Case 3:} $M_{H}$ maps generically surjectively to $N_{f/X}$. Then 
$$2g-2 - K_X \cdot C \geq - H \cdot C,$$ so that 
$$2g-2 \geq (a_2 - ea_1 +e-2)(a_1c_3+a_3 c_1) + (a_1-2)(a_2c_3 + a_3c_2)+ (a_3 -3)(a_1c_2 + a_2 c_1-ea_1c_1)$$
$$ \geq 3e c_1 +3c_2 +  (3e +a_2)c_3 \geq \frac{3}{2a_{\max}}\deg_L(C).$$

We now consider the case $a_3 = 2$, $a_2-ea_1 \geq 3$, $a_1 \geq 4$. If either $M_F$ or $M_{E+eF}$ map to $N_{f/X}$ with non-torsion image, then we see by the above discussion that $\deg N_{f/X}$ is at least $c_1+c_2+c_3$, so it remains to consider the case where both bundles have torsion image. By Lemma \ref{lem-torsion} this occurs when $(T_{\yY/A})_p$ contains all possible images of $(M_F)_p$ and $(M_{E+eF})_p$ under a multiplication map. This implies that $(T_{\yY/A})_p$ contains the ideal sheaf of the fiber of the projection to $\bF_e$. By Lemma \ref{lem-unionOfFibers}, it follows that $\yY$ consists of a union of fibers of this projection map. If the curve is contained in $E$ (i.e., the exceptional curve of $\bF_e$  times $\PP^1$), then we already know that the curve has higher genus. We may assume that the curve is not contained in $E $. Since $\PP GL_2$ acts transitively on the set of pairs of distinct points in $\PP^1$, it follows that $f(C)$ must be contained in the ramification locus of the double cover. This means that $C$ is isomorphic to a curve of class $2a_1E + 2a_2F$ in $\bF_e$. Furthermore, for the general $X$ this curve is smooth. Thus, the genus of $C$ is at least
\[ (2a_1E+2a_2F) \cdot ((2a_1-2)E+(2a_2-2-e)F) = -2a_1(2a_1-2)e+2a_1(2a_2-2-e)+2a_2(2a_1-2) \]
\[ = 2a_1(2a_2-2a_1-e)+2a_2(2a_1-2) . \]
Since $a_2 \geq a_1e+3$, we see that the first term is positive, and hence the entire sum is greater than $1$. We conclude that $X$ is algebraically hyperbolic.

We now consider the case $a_1 = 2$, $a_3 \geq 4$, and $a_2 - ea_1 \geq 2+\delta_{e,1}$ where $\delta_{e,1} = 0$ if $e \neq 1$ and $1$ if $e = 1$. If either $M_F$ or $M_H$ maps to $N_{f/X}$ with non-torsion image, then we see that $\deg N_{f/X}$ is at least $c_1 + c_2 + c_3$ (in the case $e=1$, we get slightly worse bounds on $a_2$ coming from Case 2). It remains to consider the case where both bundles have torsion image. It follows from Lemma \ref{lem-torsion} that $(T_{\yY/A})_p$ contains the image of $(M_H)_p$ and $(M_F)_p$ in $(T_{\xX / A})_p$ under all possible multiplication maps. Let $\ell$ be a fiber of the projection of $\bF_e \times \PP^1 \to \bF_e$. Then the ideal sheaf of $\ell$ is contained in $(T_{\yY/A})_p$. By Lemma \ref{lem-unionOfFibers}, it follows that $\yY$ is a union of fibers of the map from $X$ to the exceptional divisor $E$. We may assume that the curve is contained in $E$.  Then, by the transitive group action, it follows that $\yY$ must be contained in the ramification locus of the double cover.  Thus, $C$ is isomorphic to a curve of class $(2(a_2-e), 2a_3)$ in $E \cong \PP^1 \times \PP^1$. Thus, the genus is always greater than 1 in this case.
\end{proof}

Note that this completely characterizes when such $X$ are algebraically hyperbolic.


\subsection{$\PP^3$ blown up at a single point}
Let $A$ be the blowup of $\PP^3$ at a point $q$. Let $H$ denote the pullback of the hyperplane class in $\PP^3$ and let $E$ be exceptional divisor over $q$. Then $$\Pic(A) = \ZZ H \oplus \ZZ E \quad \mbox{and} \quad K_A = -4H + 2E.$$ The effective cone of $A$ is spanned by $E$ and $H-E$ and the nef cone of $A$ is spanned by $H$ and $H-E$. In the chamber bounded by $E$ and $H$, the divisor $E$ is in the stable base locus. Consequently, any irreducible effective  divisor with class $aH - b E$ must either have $a=0, b=-1$ or $a \geq b \geq 0$. The exceptional divisor $E$ is isomporphic to $\PP^2$ and not algebraically hyperbolic, so let us restrict to the case $a \geq b \geq 0$. Let $X$ be a very general surface in the class $aH - bE$. If $a \geq 4$ and $b \geq 2$, then by the generalized Noether-Lefschetz Theorem \cite{ravindraSrinivas} any curve on $X$ has class $cH-dE$ restricted to $X$.

\begin{lemma}
Let $X$ be a surface in the blowup of $\PP^3$ at a point $q$ with class $aH - bE$. If $1 \leq b \leq 3$ or $a \leq b+1$, then $X$ is not algebraically hyperbolic.
\end{lemma}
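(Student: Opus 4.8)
The plan is to show that the very general such $X$ carries a curve of geometric genus at most $1$ and positive degree with respect to an ample class; since the defining inequality $2g(C)-2\geq \epsilon\deg_H(C)$ fails outright for any integral rational or elliptic curve of positive degree, a single such curve already certifies that $X$ is not algebraically hyperbolic. I would split into the two stated regimes, each exploiting one of the two natural fibration/restriction structures on $A$.

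First I would treat the case $a\leq b+1$. Recall that $A=\PP(\OO_{\PP^2}\oplus\OO_{\PP^2}(1))$ is a $\PP^1$-bundle $\phi\colon A\to\PP^2$, the projection from $q$, induced by the linear system $|H-E|$; its fibers $\ell$ are the proper transforms of lines through $q$, so that $\ell\cdot H=\ell\cdot E=1$ and hence $X\cdot\ell=(aH-bE)\cdot\ell=a-b$. Since $X$ is effective we have $a\geq b$, so $a\leq b+1$ forces $a\in\{b,b+1\}$. If $a=b+1$, then $\phi|_X\colon X\to\PP^2$ has degree $X\cdot\ell=1$ and is dominant (were it not, $X$ would be contained in, hence equal to, a union of fibers $\phi^{-1}(D)$, giving class $(\deg D)(H-E)$ and $a=b$, a contradiction), so $\phi|_X$ is birational and $X$ is rational. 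If $a=b$, then $aH-bE=a(H-E)=\phi^*\OO_{\PP^2}(a)$, so every member of the linear system — in particular the very general $X$ — is the pullback $\phi^{-1}(C)$ of a plane curve $C$ of degree $a$, and is therefore ruled by the rational fibers $\ell$. In either subcase $X$ is covered by, or contains, rational curves and is not algebraically hyperbolic.

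Next I would treat $1\leq b\leq 3$ by restricting to the exceptional $\PP^2$. Since $H|_E=0$ and $E|_E=\OO_{\PP^2}(-1)$, we get $X|_E=(aH-bE)|_E=\OO_{\PP^2}(b)$, so $X\cap E$ is a plane curve of degree $b$. I would check that the restriction map $H^0(A,aH-bE)\to H^0(E,\OO_{\PP^2}(b))$ is surjective: writing $q=[0:0:0:1]$, the section $x_3^{\,a-b}G(x_0,x_1,x_2)$ realizes an arbitrary degree-$b$ leading form $G$ (here $a\geq b$ is what provides the room), so the image is all of $H^0(E,\OO_{\PP^2}(b))$. Surjectivity together with Bertini then shows that for very general $X$ the intersection $X\cap E$ is a smooth irreducible plane curve of degree $b$, of genus $(b-1)(b-2)/2$, which equals $0$ for $b=1,2$ and $1$ for $b=3$. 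This is a rational or elliptic curve of positive degree lying on $X$, so $X$ is again not algebraically hyperbolic.

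The only genuinely technical points, and hence the main things to pin down, are (i) the surjectivity of the restriction to $E$ — equivalently, that $X\cap E$ moves in the full system $|\OO_{\PP^2}(b)|$ so that Bertini delivers a smooth irreducible member — and (ii) the identification of $\phi$ and the computation $X\cdot\ell=a-b$ in the first case, including the verification that $E\not\subset X$ and that $\phi|_X$ is dominant for very general $X$. Both are elementary once the $\PP^1$-bundle structure $\phi\colon A\to\PP^2$ and the restriction $X|_E=\OO_{\PP^2}(b)$ are in place; no deep input is required, and there is no serious obstacle beyond this bookkeeping.
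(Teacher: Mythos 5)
Your proposal is correct and follows essentially the same route as the paper: restricting to the exceptional divisor $E$ to produce a plane curve of degree $b\leq 3$ (hence genus at most $1$), using the birationality of the projection from $q$ when $a=b+1$, and observing that $X$ is ruled by the fibers of $\phi$ (equivalently, is a cone in $\PP^3$) when $a=b$. The extra verifications you flag (surjectivity of restriction to $E$, dominance of $\phi|_X$) are fine but not needed in the paper's terser treatment, since even a singular or reducible plane curve of degree at most $3$ has an integral component of geometric genus at most $1$.
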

\begin{proof}
The intersection of $X$ with the exceptional divisor $E$ is a plane curve of degree $b$. If $b \leq 3$, this curve is either rational or elliptic. Hence, $X$ cannot be algebraically hyperbolic. Observe that $a$ must be at least $b$ in order for $\OO(aH-bE)$ to have sections. If $a = b+1$, then the projection of $X$ from $p$ gives a birational map from $X$ to $\PP^2$, hence $X$ is rational. Finally, if $a=b$, then the image of $X$ in $\PP^3$ is a cone, and is hence covered by rational curves.
\end{proof}

We further remark that by \cite{CoskunRiedlhyperbolicity} if $b=0$ and $a \geq 5$, then the very general $X$ is algebraically hyperbolic. If $b=0$ and $a \leq 4$ it is well-known that $X$ contains rational curves and is not algebraically hyperbolic. The following theorem covers the remaining cases. 

\begin{theorem}
Let $X$ be a very general surface in the blowup of $\PP^3$ at a point $p$ with class $aH - bE$. If $a \geq b+2 \geq 6$, then $X$ is algebraically hyperbolic.
\end{theorem}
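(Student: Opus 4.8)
The plan is to apply the method of \S\ref{sec-preliminaries} with the section-dominating collection $\{H,\,H-E\}$, and to isolate the two borderline values $a=b+2$ and $a=b+3$, where the naive bound degenerates. First I would assemble the numerics. As $a\geq b+2\geq 6$ we have $a\geq 4$ and $b\geq 4$, so the generalized Noether--Lefschetz theorem applies and every curve on $X$ has class $cH-dE$ with $c,d\geq 0$. On $A$ one has $H^3=E^3=1$ and $H^2E=HE^2=0$; hence for $C$ of class $cH-dE$,
\[ H\cdot C=ac,\qquad E\cdot C=bd,\qquad (H-E)\cdot C=ac-bd\geq 0, \]
the last inequality holding because $H-E$ is nef, being the pullback of $\OO_{\PP^2}(1)$ under the projection $\psi\colon A\to\PP^2$ from $p$. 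Adjunction gives $K_X=(a-4)H-(b-2)E$, so $K_X\cdot C=a(a-4)c-b(b-2)d$. The stabilizer $G$ of $p$ in $\operatorname{Aut}\PP^3$ acts on $A$ with dense orbit $A_0=A\setminus E$, and $H,E$ are $G$-invariant. The boundary $(A\setminus A_0)\cap X=E\cap X$ is a plane curve of degree $b$ in $E\cong\PP^2$, hence irreducible of genus $\binom{b-1}{2}\geq 3$ for general $X$, which disposes of every curve contained in $E$.

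Next I would verify that $\{H,H-E\}$ is section-dominating for $\eE=aH-bE$ over $A_0$. With $p=[1:0:0:0]$ and a general point $q=[0:0:0:1]$, a section of $\eE$ is a degree $a$ form $F$ lying in $\mathfrak m_p^{\,b}=(x_1,x_2,x_3)^b$; each monomial of such an $F$ that also vanishes at $q$ is divisible by $x_1$ or $x_2$ (giving a term in $H^0((H-E)\otimes\II_q)\cdot H^0((a-1)H-(b-1)E)$) or by $x_0$ (giving a term in $H^0(H\otimes\II_q)\cdot H^0((a-1)H-bE)$), the only exception $x_3^a$ being excluded by vanishing at $q$. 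By homogeneity this holds throughout $A_0$, so the map $\bigoplus M_{L_i}^{\oplus s}\to M_\eE$ of Proposition \ref{prop-smallBundle} is surjective over $A_0$. For $f\colon C\to X$ with $C\not\subset E$, restricting to $C$ and using $M_\eE|_C\twoheadrightarrow\kK|_C$ from Proposition \ref{prop-summary}, together with the fact that $\kK|_C$ has rank one, one of $M_H,M_{H-E}$ maps generically surjectively onto the rank-one sheaf $\kK|_C$. Lemma \ref{lem-degreeGenus} and Proposition \ref{cor-degImBundle} then yield, from $2g-2=K_X\cdot C+\deg N_{f/X}$,
\[ 2g-2\geq a(a-5)c-b(b-2)d\ \ (M_H),\qquad 2g-2\geq a(a-5)c-b(b-3)d\ \ (M_{H-E}). \]

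Now fix the ample class $L=2H-E$, so $\deg_L C=2ac-bd\in[ac,2ac]$. Substituting $bd\leq ac$ into the two bounds gives $2g-2\geq ac(a-b-3)$ in the $M_H$ case and $2g-2\geq ac(a-b-2)$ in the $M_{H-E}$ case. If $a\geq b+4$, the weaker of these is $\geq ac\geq\tfrac12\deg_L C$, so $X$ is algebraically hyperbolic with $\epsilon=\tfrac12$. If $a=b+3$, the $M_{H-E}$ bound already gives $2g-2\geq ac$; the only gap is when $M_{H-E}$ has torsion image (forcing $M_H$ to be the surjective bundle), and then Lemmas \ref{lem-torsion} and \ref{lem-unionOfFibers}, applied with $T$ the line $\overline{pq}$ through $p$ and a general point $q$ of $C$, force $f(C)$ into the ramification divisor $R=(a-1)H-(b+1)E$ of $\psi\colon X\to\PP^2$, which is irreducible of high genus for general $X$. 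Thus everything reduces to $a=b+2$.

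The hard case is $a=b+2$, where $\psi\colon X\to\PP^2$ is a double cover branched along a plane curve of degree $2(b+1)$. The torsion-image subcase is handled as above: $f(C)$ lands in $R=(b+1)(H-E)$, whose image is the branch curve, so $R$ is smooth of genus $\binom{2b+1}{2}$. The genuine obstacle is the remaining subcase, in which $M_{H-E}$ (and possibly $M_H$) are generically surjective yet both displayed bounds read only $2g-2\geq 0$. Here I would improve the estimate on $\deg N_{f/X}$ via Lemma \ref{lem-scrolls}: the generically surjective map $M_{H-E}|_C\to N_{f/X}$ produces a rank-one quotient $Q$ with $\deg N_{f/X}\geq\deg Q$, and the associated surface scroll $\PP(Q')$ over $C$ maps to $\PP^2$ with degree $\deg Q+(ac-bd)$. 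Exploiting the $\PP^1$-bundle structure $A=\PP_{\PP^2}(\OO_{\PP^2}\oplus\OO_{\PP^2}(1))$ of $\psi$, one should place this scroll inside $A$, read off its class, and use $C\subset S$ to gain a positive multiple of $d$ in the bound, exactly as the scroll upgrades Case 2 in the proof of Theorem \ref{thm-p2p1}. Making this transfer precise---identifying the correct class of the scroll on the nontrivial $\PP^1$-bundle $\operatorname{Bl}_p\PP^3\to\PP^2$ and extracting the needed inequality from $C\subset S$---is the step I expect to be the main obstacle, along with the verification that $R$ is irreducible for the very general $X$.
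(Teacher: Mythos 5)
Your setup, the verification that $\{H,H-E\}$ is section-dominating over $A_0$, and the cases $a\geq b+4$ and ($a=b+3$ with $M_{H-E}$ generically surjective) are fine, but the proposal has two genuine gaps, and both stem from one source: you replace the positivity input the paper uses, namely $d\leq c$ (it writes every curve class as $cH-dE$ with $c\geq d\geq 0$), by the weaker $bd\leq ac$ coming from nefness of $H-E$. Keeping $bd\leq bc$, the paper's estimates read $2g-2\geq \left[(a-5)a-(b-2)b\right]c>0$ for all $a\geq b+3$ in the $M_H$ case, and $2g-2\geq\left[(a-5)a-(b-3)b\right]c>0$ for all $a\geq b+2$ in the $M_{H-E}$ case; consequently the only configuration ever requiring geometry in the paper is $a=b+2$ with $M_{H-E}$ of torsion image. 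Your weaker substitution degrades each bound by one unit of $a-b$ and manufactures two borderline configurations the paper never faces. The first, $a=b+2$ with $M_{H-E}$ generically surjective, you leave unresolved by your own admission, and the scroll transfer you propose does not work: in Case 2 of Theorem \ref{thm-p2p1} the scroll is a surface inside the threefold because a rank-one quotient of $M_{H_1}|_C$ selects a line inside each plane fiber of $\PP^2\times\PP^1\to\PP^1$, whereas here $M_{H-E}|_C\cong\psi^*\Omega_{\PP^2}(1)|_C$ and a rank-one quotient selects a moving line in the \emph{base} $\PP^2$ of $\psi\colon A\to\PP^2$; its preimage in $A$ is a moving plane through the blown-up point, so the associated total space dominates $A$ rather than producing a surface containing $f(C)$, and no inequality relating $\deg Q$ to $d$ drops out.

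The second gap is the case $a=b+3$ with $M_{H-E}$ of torsion image. There $\psi\colon X\to\PP^2$ is a \emph{triple} cover, and the claim that Lemmas \ref{lem-torsion} and \ref{lem-unionOfFibers} force $f(C)$ into the ramification divisor is unjustified. What the union-of-fibers conclusion plus invariance under the stabilizer of the blown-up point $p$ and of $x\in f(C)$ (a $\mathbb{G}_m$ acting on the line $\overline{px}$) actually gives is that the residual degree-$3$ divisor cut on $\overline{px}$ varies in a $\mathbb{G}_m$-invariant divisor inside the $\PP^2$ of such divisors. When the residual degree is $2$, as in the paper's Case 3, the only invariant possibilities are the two curves the paper then analyzes; but for residual degree $3$ there are one-parameter families of invariant alternatives (the two residual points keeping a constant ratio in the $\mathbb{G}_m$-coordinate, tangency of the line to $X$ at a point off $f(C)$, or contact $b+1$ with $X$ at $p$), none of which your argument excludes. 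Relatedly, even in your $a=b+2$ torsion case you record only the ramification curve and omit the second invariant alternative, the curve of points residual to lines having contact $b+1$ at $p$, which is isomorphic to $E\cap X$ (it does have genus at least $3$, but it must be named and checked). The clean repair is to establish $d\leq c$ for irreducible curves not contained in $E$ --- an inequality the paper itself uses without comment --- after which your numerics collapse to the paper's Cases 1 and 2 and the only remaining geometry is the double-cover analysis you essentially already have.
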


\begin{proof}
The complement of $E$ in $A$ is homogeneous. Since we have $b \geq 4$, the intersection of $X$ with the exceptional divisor $E$ is a curve of high genus. All other curves on $X$ intersect the complement of $E$, hence we can use our technique. We take $2H -E$ as our very ample class. The divisor classes $H$ and $H-E$ form a section dominating collection of line bundles for $aH - bE$. Suppose there is a curve $C$ with class $\OO_X(cH -dE)$ for $c \geq d \geq 0$ on $X$. We have $$\deg_{2H-E}(C) = 2ac -bd.$$ Since $H$ and $H-E$ are a section dominating collection, either $M_H$ or $M_{H-E}$ maps generically surjectively to $N_{f/X}$. 

\textbf{Case 1:} $M_H$ maps generically surjectively to $N_{f/X}$ and $a \geq b+3 \geq 7$. We have
$$2g -2 - K_X \cdot C \geq - H \cdot C,$$
or equivalently 
$$ 2g-2 \geq(a-5) ac - (b-2)bd.$$
If $a \geq b+3$, then we see that $$2g-2 \geq (b-2)((b+3)c - bd)  \geq (b-2) 3c \geq \frac{3(b-2)}{2a} \deg_{2H-E}(C).$$

\textbf{Case 2:} $M_{H-E}$ maps generically surjectively to $N_{f/X}$ and $a \geq b+2 \geq 6$. we have
$$2g -2 - K_X \cdot C \geq - (H-E) \cdot C,$$
or equivalently 
$$ 2g-2 \geq(a-5) ac - (b-3)bd \geq (b-3) ((b+2)c -bd) \geq (b-3) 2c \geq \frac{b-3}{a}\deg_{2H-E}(C).$$
It follows that $X$ is algebraically hyperbolic.

\textbf{Case 3:} $M_{H-E} \to N_{f/X}$ has torsion image and $a = b+2 \geq 6$. By Lemma \ref{lem-torsion}, this means that for a general point $(p,X) \in \yY$ that the map $(\oplus_s M_{H-E})_p \to T_{\xX / A}$ factors through $T_{\yY / A} \to T_{\xX / A}$. It follows that $(T_{\yY / A})_p$ contains the degree $b+2$ hypersurfaces in $\PP^n$ with a $b$-fold point at $q$ and containing the line $\ell$ from $p$ to $q$. Applying Lemma \ref{lem-unionOfFibers}, we see that since the family $\yY$ was constructed to be invariant under the automorphisms of $\PP^n$ fixing $p$ and $q$, $\ell$ must intersect $X$ set theoretically in just two points: $p$ and $q$, since otherwise, the automorphisms of $A$ would show that $\yY$ was dense in $\xX$.

Hence, these give us two curves on each $X$. If we can show that these curves do not have genus $0$ or $1$, we conclude that $X$ is algebraically hyperbolic.
\begin{enumerate}
\item The curve of points residual to a line meeting $X$ to order $b+1$ at $q$. This locus is a curve isomorphic to the curve $E \cap X$. For a general $X$,  this is a smooth plane curve of degree at least  4, hence has genus at least 3. 
\item The curve $D$ of points $x \in X$ such that the line joining $x$ and $q$ is tangent to $X$ at $x$. The curve $D$ is isomorphic to the branch locus of the projection $\pi$ of $X$ from $q$. The projection $\pi$ realizes $X$ as a double cover of $\PP^2$. Comparing canonical bundles, we compute the class of $X$
$$2 K_X =  \pi^*(2 K_{\PP^2} + [D]).$$  Hence, $D$ is a plane curve of degree $2b+2$. Since $b \geq 4$, this curve is a curve of degree at least $10$ in $\PP^2$.  Furthermore for a general $X$, $D$ is smooth. To see this, choose coordinates such that $q= [0:0:0:1]$. Assume that $X$ has equation $f_{b+2}(x,y,z) + f_{b+1}(x,y,z)w+ f_b(x,y,z) w^2$, where $f_i$ is a homogeneous polynomial in $x,y,z$ of degree $i$. Then the branch curve is given by the equation $f_{b+1}^2 - 4 f_b f_{b+2}$. One can easily choose the polynomials $f_i$, $b \leq i \leq b+2$, so that the resulting plane curve is smooth, for example a Fermat type equation. We conclude that the genus of $D$ is at least 36 in the cases we are interested in. 
\end{enumerate}
 This completes the proof that $X$ is algebraically hyperbolic. 
	
\end{proof}

In summary, we see that $X$ is algebraically hyperbolic if $b = 0, a \geq 5$ or if $a \geq b+2 \geq 6$. It is not algebraically hyperbolic otherwise.

\subsection{$\PP(1,1,1,n)$}
Let $A = \PP(1,1,1,n)$, and let $X$ be a very general hypersurface in $A$ of class $\OO(mn)$. Then $A$ is the cone over the degree $n$ Veronese surface. The blowup $\tilde{A}$ of $A$ at the unique singular point is smooth, with Picard group generated by $H$ (the pullback of $\OO(n)$) and $F$ which is a divisor satisfying $nF \cong H-E$. The surface $X$ is smooth, with Picard group generated by $F$. We have the equations $K_{\tilde{A}} = -2H - (3-n)F$ and $K_X = (n(m-1)-3)F$.

\begin{lemma}
We have that $X$ will not be algebraically hyperbolic if $n = 1$, $m \leq 4$ or $m = 2$, $n \leq 4$.
\end{lemma}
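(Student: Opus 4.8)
The plan is to produce, on the very general $X$ in each of the listed cases, an integral curve of geometric genus $0$ or $1$. Any ample class assigns positive degree to such a curve while $2g-2\le 0$, so the inequality $2g(C)-2\ge \epsilon\deg_H(C)$ defining algebraic hyperbolicity fails for every $\epsilon>0$. I would organize the search by the two parameter ranges, reading the relevant geometry off $K_X=(n(m-1)-3)F$.

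For $n=1$ we have $A=\PP(1,1,1,1)=\PP^3$ and $X$ is a smooth surface of degree $m\le 4$ with $K_X=\OO_X(m-4)$. When $m\le 3$ the surface is a plane, a smooth quadric, or a cubic surface: each is rational and contains lines. When $m=4$, $X$ is a quartic K3 surface, which is not algebraically hyperbolic by the classical fact already recorded earlier in the paper, that smooth surfaces of degree at most $4$ in $\PP^3$ contain rational curves; one explicit low-genus family is given by the plane sections $X\cap\Pi$ for $\Pi$ bitangent to $X$, each a quartic with a node at each of its two points of tangency and hence of geometric genus $3-2=1$.

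For $m=2$ the defining equation may be written $c\,w^2+w\,g(x,y,z)+f(x,y,z)$ with $\deg g=n$ and $\deg f=2n$; for general $X$ the leading coefficient $c$ is nonzero, so $X$ avoids the vertex of the cone $A$, and the projection $[x:y:z:w]\mapsto[x:y:z]$ exhibits $X$ as a double cover $\pi\colon X\to\PP^2$ branched over the smooth curve $B=\{g^2-4cf=0\}$ of degree $2n$. I would then pull back lines $\ell\subset\PP^2$, using that $\pi^{-1}(\ell)$ is the double cover $\{u^2=b|_\ell\}$, where $b$ defines $B$. For $n\le 2$ the class $K_X=(n-3)F$ is anti-ample and $X$ is del Pezzo, hence rational. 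For $n=3$ (so $B$ is a sextic and $X$ is a K3) and $\ell$ simply tangent to $B$, the section $b|_\ell$ is a square times a degree $2n-2=4$ polynomial with simple roots, and normalizing the node of $\pi^{-1}(\ell)$ over the tangency point yields an irreducible double cover of $\PP^1$ branched at $4$ points, of geometric genus $1$; these sweep out a one-parameter family as $\ell$ ranges over the tangent lines of $B$. For $n=4$ (so $B$ is an octic and $K_X=F$ is ample) and $\ell$ bitangent to $B$, the curve $\pi^{-1}(\ell)$ has two nodes and normalizes to a double cover of $\PP^1$ branched at $2n-4=4$ points, again of geometric genus $1$. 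This last computation is the weighted analogue of the octic double-cover argument for $(a,b)=(4,2)$ on $\PP^2\times\PP^1$ carried out earlier.

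The main obstacle, and the only point requiring care, is the combined genus-and-irreducibility bookkeeping for these pulled-back lines and plane sections, together with the verification that the required (bi)tangent lines or planes exist for the \emph{very general} $X$. Concretely one checks that $B$ (respectively $X$) is smooth with the expected dual and bitangent behaviour, that each $\pi^{-1}(\ell)$ (respectively $X\cap\Pi$) is irreducible, and that the Riemann--Hurwitz count after normalizing the nodes gives genus exactly $1$; the substitution $v=u/(\text{linear factor})$ in $u^2=b|_\ell$ makes irreducibility transparent, since for general $B$ the residual polynomial of degree $4$ is not a perfect square.
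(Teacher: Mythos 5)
Your proposal is correct and follows essentially the same route as the paper: for $n=1$ it reduces to the classical non-hyperbolicity of surfaces of degree at most $4$ in $\PP^3$, and for $m=2$ it uses the double-cover structure $X\to\PP^2$ and (bi)tangent lines to the degree-$2n$ branch curve to produce genus $0$ or $1$ curves. The only (harmless) deviation is that for $n=3$ you use simply tangent lines to get genus $1$ curves where the paper uses bitangent lines to get rational ones, and you spell out the del Pezzo cases $n\le 2$ and the quartic K3 case that the paper leaves to citation.
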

\begin{proof}
Observe that if $n=1$, we are in the case of projective space. Hypersurfaces in projective space are algebraically hyperbolic when $m \geq 5$ and not if $m \leq 4$. If $m = 2$, then we are considering double covers of $\PP^2$ branched along a curve of degree $2n$. If $n =3$, then the bitangent lines to the branch locus have preimage consisting of a rational curve, and if $n=4$, the bitangent lines have preimage consisting of an elliptic curve, so neither is algebraically hyperbolic. 
\end{proof}

\begin{proposition}
$X$ is algebraically hyperbolic if $m \geq 4, n \geq 2$, $m= 3, n \geq 4$ or $m = 2, n \geq 5$.
\end{proposition}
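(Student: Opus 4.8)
The plan is to mimic the structure of the three preceding theorems (on $\PP^1\times\PP^1\times\PP^1$, $\PP^2\times\PP^1$ and the blowup of $\PP^3$), working on the smooth blowup $\tilde{A}$ rather than the singular $\PP(1,1,1,n)$ itself. The complement of the exceptional divisor $E$ (equivalently, the complement of the cone point) is homogeneous, and since $X$ meets $E$ in a high-degree curve in the cases of interest, every other curve $f:C\to X$ meets the homogeneous locus, so the machinery of \S\ref{sec-preliminaries} applies. First I would identify a section-dominating collection of line bundles for $\eE = \OO(mn)$ (i.e.\ the class $mH$ on $\tilde{A}$), the natural candidates being $H$ and $F$ (with $nF\cong H-E$). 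I would check the section-dominating hypotheses of Definition \ref{def-sectionDominating} by the same monomial argument used in the earlier examples: every homogeneous polynomial of the appropriate weighted degree vanishing at a point decomposes as a combination of the chosen low-degree forms vanishing at that point. Then Proposition \ref{prop-smallBundle} gives a surjection $\bigoplus M_{L_i}\to M_{\eE}$, so for any curve $C$ on $X$ either $M_H$ or $M_F$ maps generically surjectively onto $N_{f/X}$.

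Next I would run the now-standard case analysis. Write the class of $C$ as $cF$ (since $\Pic(X)=\ZZ F$), compute $K_X\cdot C$ using $K_X=(n(m-1)-3)F$, and in each case apply Proposition \ref{cor-degImBundle} to get
\[
2g-2 = K_X\cdot C + \deg N_{f/X} \geq K_X\cdot C - \deg L_i|_C .
\]
For the ``interior'' ranges ($m\geq 4, n\geq 2$; $m=3, n\geq 4$), I expect this inequality, after substituting the intersection numbers $F^2$ and $F\cdot H$ on $X$, to yield $2g-2\geq \epsilon \deg_L(C)$ directly for a fixed ample class $L$ and a uniform $\epsilon>0$, completing algebraic hyperbolicity in those cases. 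The routine part is computing these intersection numbers on $\tilde A$ (using $H^2=nF\cdot H$, $H\cdot F$, $F^2$, and $E\cdot$ relations) and verifying the coefficients stay positive.

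The delicate case will be the boundary, $m=2, n\geq 5$, which is the double-cover regime where the straightforward bound from the section-dominating collection fails to be sharp---exactly the phenomenon the authors flag in the introduction. Here I would expect $M_F$ (or whichever bundle corresponds to fibers of the projection $\tilde A\to\PP^2$ realizing $X$ as a double cover of $\PP^2$) to have torsion image in $N_{f/X}$, and I would invoke Lemma \ref{lem-torsion} to conclude that $(T_{\yY/A})_{(p,f)}$ contains the ideal sheaf of the relevant fiber. Applying Lemma \ref{lem-unionOfFibers} together with the transitivity of the group action (the orbit of two distinct points under $\mathrm{PGL}$ being dense, as in the $\PP^2\times\PP^1$ and $\PP^1\times\PP^1\times\PP^1$ arguments) forces $f(C)$ into the ramification locus of the double cover $X\to\PP^2$. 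That ramification/branch curve is a plane curve whose degree I would read off by comparing canonical classes ($2K_X=\pi^*(2K_{\PP^2}+[D])$, giving a branch curve of degree $2n$), and for $n\geq 5$ its genus exceeds $1$, so no rational or elliptic curves survive.

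The main obstacle I anticipate is the $m=2$ analysis: making precise which line bundle acquires torsion image and correctly identifying the fibration to which Lemma \ref{lem-unionOfFibers} applies, since on the weighted projective cone the geometry of the ``fibers'' (images of lines through, or residual to, the cone point) is subtler than in the product cases. I would need to argue carefully that the alternative---$\yY$ meeting a general fiber in two distinct points and thereby being dense in $\xX$---is genuinely excluded by the $G$-invariance, and that the ramification curve is smooth of the claimed degree for very general $X$ (a Fermat-type explicit choice, as in the blowup-of-$\PP^3$ proof, should suffice). Everything outside $m=2$ should reduce to bookkeeping with the positive coefficients.
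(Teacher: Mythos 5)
Your proposal follows essentially the same route as the paper: for $m\geq 3$ the genus bound comes from a generically surjective map $M_H\to N_{f/X}$ via Proposition \ref{cor-degImBundle}, and for $m=2$ one runs the torsion/non-torsion dichotomy for $M_F$, using Lemmas \ref{lem-torsion} and \ref{lem-unionOfFibers} plus $G$-invariance to force any offending curve into the ramification divisor of the double cover $X\to\PP^2$, a smooth plane curve of degree $2n$. Two factual slips should be corrected, though neither derails the argument. First, $F$ is \emph{not} section-dominating for $\OO(mH)$, jointly with $H$ or otherwise: a section of $F$ vanishing at a point $p$ vanishes along the entire ruling line $\ell$ joining $p$ to the cone point, so $H^0(F\otimes\II_p)\otimes H^0(mH-F)$ only reaches $H^0(\II_\ell\otimes\OO(mH))$ (your monomial check fails, e.g.\ $wz^n$ is not in the image when $p$ lies over $[0:0:1]$), and moreover $\OO(mH-F)$ restricts to $\OO(-1)$ on $E$, so it is not globally generated as Definition \ref{def-sectionDominating} requires --- the paper itself notes $M_F$ is ``not quite section dominating.'' This does not matter in the end: $H$ alone is section-dominating, which is all the $m\geq 3$ cases need, and your $m=2$ analysis is exactly the tautologically exhaustive dichotomy ``image of $M_F$ non-torsion (Proposition \ref{cor-degImBundle} gives $2g-2\geq (n-4)F\cdot C$) versus torsion (Lemmas \ref{lem-torsion} and \ref{lem-unionOfFibers})'', which requires no section-domination statement for $F$; indeed the failure of $F$ to be section-dominating is precisely why the torsion case can occur and must be handled geometrically. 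Second, $X$ does not meet $E$ in a high-degree curve: $\OO(mH)$ restricts trivially to $E$, so the very general $X$ misses the cone point entirely and $X\cap E=\emptyset$; the requirement that every curve on $X$ meet the homogeneous locus is therefore automatic, not a consequence of a high-genus curve inside $E$.
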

\begin{proof}
First, we address the cases where $m \geq 3$. For these cases, observe that $H$ is a section-dominating line bundle for $\OO(mH)$. Thus, we have a map $M_H \to N_{f/X}$ whose image is not torsion. This implies that
\[ \deg N_{f/X} \geq -H \cdot C = -nF \cdot C. \]
Thus,
\[ 2g-2 \geq (K_X - nF) \cdot C = (n(m-2) - 3)F \cdot C . \]
If $m \geq 4, n \geq 2$ or $m = 3, n \geq 4$, we see that $n(m-2)-3$ is at least $1$, so algebraic hyperbolicity follows.

Next, consider the case $m=2$. Consider the bundle $M_F$. This bundle is not quite section dominating for $\OO(mH)$. If a section of $F$ vanishes at a point of $\tilde{A}$, it vanishes on the entire line through that point and the cone point (here we picture $A$ as the cone over a Veronese surface). There are two cases. First, suppose that some copy of $M_F$ maps nontrivially to the free part of $M_{L}$. Then we have
\[ \deg N_{f/X} \geq -F \cdot C  \]
so that 
\[ 2g-2 \geq (n(m-1)-4)F \cdot C . \]
If $m=2$, we see that $X$ is algebraically hyperbolic provided that $n \geq 5$.

Now suppose that none of the copies of $M_F$ map nontrivially to the free part of $N_{f/X}$. This means that at a general point $p$ of $\yY$, we have that $(T_{\yY}^{vert})_p$ contains all surfaces in $A$ with fixed intersection with a line passing through the exceptional divisor. If $m=2$, then this shows that the curve must be the ramification divisor of the double cover, which does not have genus $0$ or $1$ provided that $n$ is at least $5$, as required.
\end{proof}

The only remaining cases are $m=3$, $n=2,3$.

\bibliographystyle{plain}

\end{document}